\documentclass[a4paper,12pt]{article}
\usepackage{amsmath}
\usepackage{amssymb}
\usepackage{listings}

\usepackage{amsthm}
\theoremstyle{plain}
\newtheorem{theorem}{Theorem}
\newtheorem{prop}{Proposition}
\newtheorem{lemm}{Lemma}
\newtheorem{corr}{Corollary}

\theoremstyle{definition}

\def\dom{\mathrm{dom}\,}

\def\pa{\partial}
\def\ol{\overline}
\def\be{\begin{equation}}
\def\ee{\end{equation}}
\newcommand{\Ri}{\mathbb{R}\cup\{+\infty\}}
\newcommand{\R}{\mathbb{R}}
\newcommand{\N}{\mathbb{N}}
\def\eps{\varepsilon}
\def\la{\lambda}

\DeclareMathOperator*{\argmin}{-argmin}
\DeclareMathOperator*{\Crit}{-Crit}
\DeclareMathOperator*{\crit}{-crit}

\title{Slopes and Moreau-Rockafellar Theorem}
\author{M. Ivanov\thanks{Radiant Life Technologies Ltd., Nicosia, Cyprus, e-mail:milen@radiant-life-technologies.com}, N. Zlateva\thanks{{Faculty of Mathematics and Informatics, Sofia University,   5, James Bourchier Blvd, 1164 Sofia, Bulgaria, e-mail:zlateva@fmi.uni-sofia.bg}}}
\date{Dedicated to R. T. Rockafellar}

\begin{document}
\maketitle
\begin{abstract}
Properties of local and global slope of a function and its approximate critical points sets are studied in relation to determination of the function.
\\[0.2cm]
 \textsl{2020 MSC}: 49J52, 47J22\\
 \textsl{Key words}: slope, determination of a function
\end{abstract}

 \section*{Introduction}\label{sec:intro}
 The slopes have been studied in relation to gradient flows, e.g. \cite{AGS,GMT}, but they are well defined where the differential inclusions are problematic,
 as for example in general metric spaces lacking any linear-like structure. The evolution of the concept is excellently explained in the monograph \cite{I},
 see also \cite{DIL}. For a state-of-the-art account see \cite{Aris-new}.
 Many striking and surprising results have been obtained to show that in many cases slope alone determines the function, see e.g. \cite{PASV,TZ,Aris-new}. One drawback of this very active and recent development is the use of transfinite induction. Here we remove any such reference and in the process develop the topic a bit further.

 Why it is important to avoid the use of transfinite induction we explain briefly in the Appendix.

 In this article we consider results like: let $f$ and $g$ be continuous functions on a complete metric space and let the local slope of $f$ be everywhere bigger than that of $g$.
 The case when $f$ and $g$ are differentiable on a Banach space with $\|f'(x)\| > \|g'(x)\|$ whenever $f'(x)\neq0$, is interesting enough.
 Then under certain conditions the infimum of $f-g$ is attained on the set of approximately critical points of $f$.

 Our approach, based on an idea from \cite{Aris}, is to consider for an arbitrary fixed $x_0$ the minimisation problem
 $$
    \begin{cases}
        f(x) \to \min\\
        f(x)-g(x) \le f(x_0) - g(x_0),
    \end{cases}
 $$
 and observe that -- because of the slopes condition -- any (approximately) critical point cannot be on the boundary of the feasible set, so it is actually a critical point of $f$.

 Even more can be said in the case of global slopes, where we can estimate the infimum of $f-g$ through the infima of $f$ and $g$. This fully primal result almost immediately yields the celebrated Moreau-Rockafellar Theorem \cite{M,R}.

 The first version of this work has been completed before the appearance of the recent preprint \cite{Aris-new}.
 There a new general result is obtained via transfinite induction. Here we cover a variant of this result, see Theorem~\ref{thm:cont-case-new}.

 The paper is organised as follows. In Section~\ref{sec:def} we give the necessary definitions.  Section~\ref{sec:prop} is devoted to the properties of the slopes.
 Section~\ref{sec:res} contains the main results.  In Section~\ref{sec:mrt} we give a proof to Moreau-Rockafellar Theorem using slopes.

\section{Definitions}\label{sec:def}
Here we define some concepts and notations we will be using throughout the paper.

We consider functions defined on a  metric space $(M,\rho)$ with values in $\mathbb R\cup\{+\infty\}$.
Although it is somewhat standard in such setting to adopt the convention $\infty-\infty=\infty$, we find this approach prone to errors, thus we avoid it. So, "$\infty-\infty$" is undefined.
Therefore, if nothing more is specified, formula like for example $\{x:\ f(x)-g(x) \le \lambda\}$ means those $x$ for which at least one of $f(x)$ and $g(x)$ is finite, and
$f(x)-g(x) \le \lambda$. This said, we usually take care to write more explicitly, so for $\lambda\in\mathbb{R}$ we would rather write the equivalent formulae
$\{x\in\dom f:\ f(x)-g(x) \le \lambda\}$ where, as usual, for $f:M\to\Ri$, the domain of $f$ is
$$
    \dom f := \{x:\ f(x) < \infty\}.
$$
We denote the sub-level set of $f$ below the level $\lambda\in\mathbb{R}$ by
$$
    L_\lambda(f) := \{x:\ f(x)\le\lambda\}.
$$
Following the convention above,
$$
    L_\lambda(f-g) := \{x\in\dom f:\ f(x)-g(x)\le\lambda\}.
$$

For $f$ which is proper, i.e. such that $\dom f\neq\varnothing$, and bounded below; and $\eps \ge 0$, we denote
$$
    \varepsilon\argmin f := \{x: f(x) \le \inf f + \varepsilon\},
$$
hence, $\varepsilon\argmin f = L_{(\inf f+\varepsilon)} (f)$.

If $x\in\dom f$ then the \emph{local slope} of $f$ at $x$ is defined as
$$
    |\nabla f|(x) := \limsup_{y\to x\atop y\neq x}\frac{[f(x)-f(y)]^+}{\rho(x,y)}.
$$
Here $[t]^+ := \max\{0,t\} = (t+|t|)/2$, so $|\nabla f|(x) = 0$ if $x$ is a local minimum to $f$, and if not
$$
    |\nabla f|(x) = \limsup_{y\to x\atop y\neq x}\frac{f(x)-f(y)}{\rho(x,y)}.
$$
In the latter statement we explicit that $|\nabla f|(x) = 0$ at the isolated points of $\dom f$.
This notion was introduced by De Giorgi, Marino and Tosques~\cite{GMT} and used thereafter for the study of concepts as descent curves (e.g. \cite{DIL}),    error bounds (e.g. \cite{AC}),   subdifferential calculus and metric regularity (e.g. the monograph~\cite{I} and  references therein).

We may consider $|\nabla f|$ as a function from $\dom f$ to  $  [0,\infty]$. Moreover,
\begin{equation}
    \label{eq:mult-slope}
    |\nabla (rf)|(x) = r |\nabla f|(x),\quad\forall r\ge 0,\ \forall x\in\dom f.
\end{equation}
Because $[a+b]^+ \le [a]^+ + [b]^+$ we have
\begin{equation}
    \label{eq:triang-ineq}
    |\nabla (f+g)| \le |\nabla f| + |\nabla g|,
\end{equation}
which  for $g:M\to\mathbb{R}$ implies
\begin{equation}
    \label{eq:triang-ineq-prim}
    |\nabla (f-g)| \ge |\nabla f| - |\nabla g|,\quad\forall x\in\dom|\nabla g|.
\end{equation}
Let us denote
$$
    \varepsilon\crit f := \{x:\ |\nabla f|(x) \le \varepsilon\}.
$$
In other words, $\varepsilon\crit f = L_\varepsilon(|\nabla f|)$. If the space $M$ is complete and the function $f$ is lower semicontinuous and bounded below, then Ekeland Variational Principle gives that $\inf |\nabla f| = 0$, thus $\varepsilon\crit f = \varepsilon\argmin |\nabla f|$ (see Proposition~\ref{pro:EVP} below).

The \emph{global slope} of $f$ at $x\in\dom f$ is defined as
$$
    |\widetilde \nabla f|(x) := \sup_{y\neq x}\frac{[f(x)-f(y)]^+}{\rho(x,y)},
$$
and we can consider $|\widetilde \nabla f|$ as a function from $\dom f$ to  $  [0,\infty]$.  This notion is introduced by Thibault and Zagrodny in~\cite{TZ}.

Obviously, $|\widetilde \nabla f| \ge |\nabla f|$ on $\dom f$. It is also clear that the subadditivity \eqref{eq:mult-slope}, \eqref{eq:triang-ineq} and \eqref{eq:triang-ineq-prim},
 holds for the global slope $|\widetilde\nabla\cdot|$ as well.

By analogy we define
$$
    \varepsilon\Crit f := L_\varepsilon(|\widetilde \nabla f|).
$$
It is immediate  that
$$
    \varepsilon\Crit f = \{x\in\dom f:\ f(y)\ge f(x) - \varepsilon\rho (y,x),\ \forall y\}.
$$

\section{Properties of the slopes}\label{sec:prop}
In this section we consider some properties of the local and the global slope of a given function.
\begin{lemm}
    \label{lem:log-ro}
    Let $(M,\rho)$ be a metric space and let $a\in M$ be fixed. Set
    $$
        \varphi(x) := -\log\rho(x,a).
    $$
    Then
    \begin{equation}
        \label{eq:log-ro}
        |\widetilde\nabla\varphi|(x) \le \frac{1}{\rho(x,a)},\quad\forall x\neq a.
    \end{equation}
\end{lemm}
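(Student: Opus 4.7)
The plan is to unfold the definition of $|\widetilde\nabla\varphi|(x)$ and bound the positive-part ratio for an arbitrary $y\neq x$ by $1/\rho(x,a)$, independently of $y$.

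First I would write
$$
    |\widetilde\nabla\varphi|(x) = \sup_{y\neq x} \frac{[\log\rho(y,a) - \log\rho(x,a)]^+}{\rho(x,y)},
$$
and observe that we only need to consider those $y$ for which $\rho(y,a) > \rho(x,a)$, since otherwise the positive part vanishes. In particular such $y$ automatically have $\rho(y,a) > 0$, so everything is well-defined.

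Next, the key estimate is the triangle inequality $\rho(y,a) \le \rho(y,x) + \rho(x,a)$, which rearranges to
$$
    \frac{\rho(y,a)}{\rho(x,a)} \le 1 + \frac{\rho(x,y)}{\rho(x,a)}.
$$
Taking logs and using the elementary inequality $\log(1+t) \le t$ valid for $t\ge 0$ yields
$$
    \log\rho(y,a) - \log\rho(x,a) \le \frac{\rho(x,y)}{\rho(x,a)}.
$$
Dividing by $\rho(x,y) > 0$ gives the bound $1/\rho(x,a)$ for the ratio, uniformly in $y$, and taking the supremum proves \eqref{eq:log-ro}.

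There is no real obstacle here; the only subtle point is treating $y$ with $\rho(y,a)\le\rho(x,a)$ (including $y = a$, where $\varphi(y) = +\infty$ makes $\varphi(x)-\varphi(y) = -\infty$ and the positive part is $0$) separately so that taking logs is justified. Once that is handled, the triangle inequality plus $\log(1+t)\le t$ closes the argument cleanly.
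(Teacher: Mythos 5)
Your proof is correct and follows essentially the same route as the paper: both arguments reduce to the triangle inequality $\rho(y,a)\le\rho(x,a)+\rho(x,y)$ combined with the concavity estimate for the logarithm (your $\log(1+t)\le t$ is just the paper's $\log(t+s)\le\log t+s/t$ in normalized form). Your explicit remark that $y$ with $\rho(y,a)\le\rho(x,a)$, including $y=a$, contribute nothing to the supremum is a welcome clarification of a point the paper handles implicitly.
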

\begin{proof}
    Fix $x\neq a$ and let $y\neq a$ be such that $\varphi(y)<\varphi(x)$, that is, $\rho(y,a)>\rho(x,a)$. Using the inequality
    $$
        \log(t+s) \le \log t + \frac{s}{t},\quad \forall t,s > 0,
    $$
    due to the concavity of the logarithm, we get
    $$
        \log\rho(y,a) \le \log(\rho(x,a)+\rho(x,y))
        \le \log \rho(x,a) + \rho(x,y)/\rho(x,a).
    $$
    Therefore, $\varphi(y) \ge \varphi(x) - \rho(x,y)/\rho(x,a)$, that is,
    $$
        \frac{\varphi(x)-\varphi(y)}{\rho(x,y)} \le \frac{1}{\rho(x,a)}.
    $$
    As $y$ with $\varphi(y) < \varphi(x)$ was arbitrary, we get \eqref{eq:log-ro}.
\end{proof}

\begin{lemm}
    \label{lem:min}
    Let $(M,\rho)$ be a metric space. Let $g:M\to \mathbb{R}\cup\{+\infty\}$ be a proper function. Let $\lambda\in \mathbb{R}$. Consider the function
    $$
        g_1(x) := \min\{g(x),\lambda\},\quad\forall x\in M.
    $$
    Then
    $$
    |\nabla g_1|(x) \le |\nabla g| (x)\text{ and }|\widetilde\nabla g_1|(x) \le |\widetilde\nabla g| (x),\quad\forall x\in \dom g.
    $$
\end{lemm}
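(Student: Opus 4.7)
The plan is to reduce both slope inequalities to a single pointwise statement: for every $x \in \dom g$ and every $y \in M$ with $y \neq x$,
$$
    [g_1(x) - g_1(y)]^+ \le [g(x) - g(y)]^+,
$$
where the right-hand side is read as $0$ whenever $g(y)=+\infty$ (consistently with the convention that such $y$ contribute nothing to $|\nabla g|$ or $|\widetilde\nabla g|$). Once this is in hand, dividing by $\rho(x,y)$ and taking $\sup_{y\neq x}$ gives $|\widetilde\nabla g_1|(x) \le |\widetilde\nabla g|(x)$, and taking $\limsup_{y\to x,\,y\neq x}$ gives $|\nabla g_1|(x) \le |\nabla g|(x)$.

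The pointwise inequality follows from a short case split, reflecting the fact that $t \mapsto \min\{t,\lambda\}$ is non-decreasing and $1$-Lipschitz on $\mathbb{R}$ (extended by $\lambda$ at $+\infty$). If $g(x)\le\lambda$ then $g_1(x)=g(x)$; either $g(y)\le\lambda$, in which case both sides agree, or $g(y)>\lambda$ (possibly $+\infty$), in which case $g_1(y)=\lambda\ge g_1(x)$ and the left-hand side vanishes. If $g(x)>\lambda$ then $g_1(x)=\lambda$; either $g(y)\ge\lambda$, in which case $g_1(x)-g_1(y)\le 0$ and the left-hand side vanishes, or $g(y)<\lambda$, in which case $g_1(x)-g_1(y)=\lambda-g(y) \le g(x)-g(y)$, with both quantities positive. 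In every case the desired bound holds.

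I do not foresee a real obstacle here; the statement is a soft monotonicity/contraction property of the truncation operation and the only care needed is in handling the value $g(y)=+\infty$, which is done uniformly by noting $g_1(y)=\lambda$ and comparing directly. The proof therefore consists of the single display above followed by the case distinction, after which the two slope inequalities are immediate from their respective definitions.
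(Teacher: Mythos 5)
Your proposal is correct and follows essentially the same route as the paper: reduce both inequalities to the pointwise bound $[g_1(x)-g_1(y)]^+ \le [g(x)-g(y)]^+$ and verify it by the same case split on whether $g(x)\le\lambda$ or $g(x)>\lambda$, then pass to $\sup$ or $\limsup$. The only cosmetic difference is that you organise the sub-cases by comparing $g(y)$ with $\lambda$ rather than $g_1(y)$ with $g(x)$, and you make the $g(y)=+\infty$ convention explicit, which the paper leaves implicit.
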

\begin{proof}
    Fix $x\in \dom g$.

    It is enough to show that
    \begin{equation}
        \label{eq:g-1+ineq}
        [g_1(x)-g_1(y)]^+ \le [g(x)-g(y)]^+, \quad\forall y\in M,
    \end{equation}
    because then we can just take $\limsup$ or $\sup$.

    \textsc{Case 1:}  $g(x)\le \lambda$.

    Then $g_1(x) = g(x)$ and so $[g_1(x)-g_1(y)]^+=[g(x)-g_1(y)]^+$ for any $y\in M$.

    If $y$ is such that $g_1(y) \ge g(x)$ then also $g(y) \ge g(x)$ and  $[g(x)-g_1(y)]^+=[g(x)-g(y)]^+=0$.

    If, on the other hand, $y$ is such that $g_1(y) < g(x)$ then, of course, $g_1(y) = g(y)$. So,
    $[g(x)-g_1(y)]^+ = [g(x)-g(y)]^+$.

    We see that in this case \eqref{eq:g-1+ineq} holds even with equality.

    \textsc{Case 2:} $g(x) > \lambda$.

    Then $g_1(x) = \lambda$ and so $[g_1(x)-g_1(y)]^+=[\lambda-g_1(y)]^+$.

    If $y$ is such that $g(y)\ge \lambda$ then by definition $g_1(y) = \lambda$ and $[g_1(x)-g_1(y)]^+=0\le[g(x)-g(y)]^+$.

    If $y$ is such that $g(y) < \lambda$ then $g_1(y) = g(y)$ and $[g_1(x)-g_1(y)]^+=[\lambda-g(y)]^+$. Since  $t\to[t]^+$ is a non-decreasing function,  $[\lambda-g(y)]^+\le [g(x)-g(y)]^+$.

    So, \eqref{eq:g-1+ineq} holds.
\end{proof}

 Note that $\varepsilon\Crit f$ is the set of those points where  the Hausdorff regularisation of $f$ coincides with $f$:
$$
    f(x) = f_{1/\varepsilon} (x) := \inf \{f(y) +\varepsilon \rho(y,x)\}.
$$
As the Hausdorff regularisation is Lipschitz, clearly a lower semicontinuous  $f$ is Lipschitz on the closed set $\varepsilon\Crit f$, but this can also be  proved directly:

\begin{lemm}\label{lem:crit}
    Let $f:M\to\mathbb{R}\cup\{+\infty\}$ be a proper and lower semicontinuous function on a metric space $(M,\rho)$. Then $\varepsilon\Crit f$ is a closed set on which $f$ is $\varepsilon$-Lipschitz.
\end{lemm}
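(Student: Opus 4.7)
The plan is to establish the Lipschitz property first, and then use it to get closedness.

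For the Lipschitz property, take any two points $x_1, x_2 \in \varepsilon\Crit f$. By the characterisation
$$\varepsilon\Crit f = \{x\in\dom f:\ f(y) \ge f(x) - \varepsilon\rho(y,x),\ \forall y\}$$
derived in Section~\ref{sec:def}, applying the defining inequality at $x_1$ with $y = x_2$ and symmetrically at $x_2$ with $y = x_1$ yields both $f(x_2) \ge f(x_1) - \varepsilon\rho(x_1,x_2)$ and $f(x_1) \ge f(x_2) - \varepsilon\rho(x_1,x_2)$, hence $|f(x_1)-f(x_2)| \le \varepsilon\rho(x_1,x_2)$.

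For closedness, I would take a sequence $(x_n)\subset \varepsilon\Crit f$ with $x_n\to x$ in $M$ and show $x\in\varepsilon\Crit f$. The Lipschitz bound just proved gives $|f(x_n)-f(x_m)|\le\varepsilon\rho(x_n,x_m)$, so $(f(x_n))$ is Cauchy in $\mathbb{R}$ and converges to some real $L$. Passing to the limit in $f(y)\ge f(x_n) - \varepsilon\rho(y,x_n)$, valid for every $y\in M$ by the characterisation, yields $f(y)\ge L - \varepsilon\rho(y,x)$ for all $y$. Taking $y=x$ gives $f(x)\ge L$, in particular $f(x)>-\infty$, while lower semicontinuity of $f$ gives $f(x)\le\liminf_n f(x_n) = L$. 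Hence $f(x)=L\in\mathbb{R}$, so $x\in\dom f$ and $f(y)\ge f(x)-\varepsilon\rho(y,x)$ for all $y$, meaning $x\in\varepsilon\Crit f$.

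The only mild subtlety is the risk that $f(x) = +\infty$ or $L = -\infty$; the Cauchy argument combined with lower semicontinuity neatly rules both out, which is why the Lipschitz property has to be proved \emph{before} closedness rather than derived from it. No other obstacles are anticipated; the proof is essentially a direct unfolding of the definition of $\varepsilon\Crit f$.
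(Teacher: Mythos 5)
Your proof is correct and follows essentially the same route as the paper: unfold the characterisation of $\varepsilon\Crit f$, identify $\lim_n f(x_n)$ via lower semicontinuity, and pass to the limit in the defining inequality. The only cosmetic difference is that you force finiteness of the limit through the Cauchy/Lipschitz argument, whereas the paper obtains $f(\bar x)\ge\limsup_n f(x_n)$ directly from the defining inequality applied at $y=\bar x$ and rules out $f(\bar x)=+\infty$ by properness (testing against any $y\in\dom f$) --- so, contrary to your closing remark, closedness does not actually require the Lipschitz step to come first.
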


\begin{proof}
Fix  $\varepsilon > 0 $ and a convergent sequence $\{x_n\}_{n=1}^\infty\subset\varepsilon\Crit f$. Let
$$
    \lim_{n\to\infty} x_n = \bar x.
$$
By definition
$$
    f(\bar x) \ge f(x_n) - \varepsilon\rho(\bar x,x_n),
$$
which, after taking a limit, gives
$$
    f(\bar x) \ge \limsup_{n\to\infty} f(x_n).
$$
But $f$ is given to be lower semicontinuous at $\bar x$, so
$$
    \lim_{n\to\infty} f(x_n) = f(\bar x).
$$
Let $y\in\dom f$ be arbitrary. By definition,
$$
    f(y) \ge f(x_n) -\varepsilon\rho(y,x_n),
$$
so
$$
    f(y) \ge f(\bar x) -\varepsilon\rho(y,\bar x).
$$
In particular $\bar x\in\dom f$ and, therefore, $\bar x\in \varepsilon\Crit f$.

We have proved that $\varepsilon\Crit f$ is closed.

Let $x,y\in \varepsilon\Crit f$. By definition, $f(y) - f(x) \ge -\varepsilon\rho(y,x)$ and $f(x) - f(y) \ge -\varepsilon\rho(x,y)$, so $|f(y) - f(x)| \le \varepsilon\rho(y,x)$.
\end{proof}

\begin{lemm}\label{lem:invar}
    Let $f,g:M\to\mathbb{R}\cup\{+\infty\}$ and let for some $\lambda \in \R$,
    \begin{equation}
        \label{eq:L-lam}
          L_\lambda(f-g)\cap \dom f \cap\dom  g\neq \varnothing.
    \end{equation}
Let $M_1 := L_\lambda(f-g)$, and
$$
        f_1 := f\restriction_{M_1}.
    $$
    If $x\in M_1$ and
    \begin{equation}
        \label{eq:main-ineq}
        |\nabla  f|(x) > |\nabla g|(x),
    \end{equation}
then
    $$
        |\nabla f_1|(x) = |\nabla f|(x).
    $$
\end{lemm}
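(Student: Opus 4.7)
The inequality $|\nabla f_1|(x) \le |\nabla f|(x)$ is immediate, since the limsup defining $|\nabla f_1|(x)$ is taken over $y \in M_1 \setminus \{x\}$, a subset of $M \setminus \{x\}$, with the same quantity $[f(x)-f(y)]^+/\rho(x,y)$. All the content lies in the reverse inequality, and the plan is to show that points $y$ near $x$ lying outside $M_1$ contribute only ratios bounded by those measuring the slope of $g$, and so cannot push the limsup above $|\nabla g|(x)$.

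The key pointwise bound is this: for any $y \neq x$ with $y \notin M_1$, I claim that
\[
    \frac{[f(x)-f(y)]^+}{\rho(x,y)} \le \frac{[g(x)-g(y)]^+}{\rho(x,y)}.
\]
If $f(y) = +\infty$ the left side is $0$ and there is nothing to prove. Otherwise $y \in \dom f \setminus M_1$, so $f(y)-g(y) > \lambda \ge f(x)-g(x)$ (using $x \in M_1$ and $x \in \dom g$, the latter implicit from considering $|\nabla g|(x)$). Rearranging gives $f(x)-f(y) < g(x)-g(y)$, whence the inequality on positive parts.

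Next, I would split the sup defining $|\nabla f|(x)$ at scale $\delta > 0$ as $\max(A_\delta, B_\delta)$, where $A_\delta$ is the sup of the ratio over $y \in M_1$ with $0 < \rho(x,y) < \delta$ and $B_\delta$ the sup over $y \notin M_1$ with $0 < \rho(x,y) < \delta$ (each understood as $0$ when the set is empty). As $\delta \downarrow 0$, $A_\delta \to |\nabla f_1|(x)$ by the very definition of the local slope of $f_1$ on the metric subspace $M_1$, while the pointwise bound above yields
\[
    B_\delta \le \sup_{0<\rho(x,y)<\delta}\frac{[g(x)-g(y)]^+}{\rho(x,y)} \xrightarrow[\delta\to 0]{} |\nabla g|(x).
\]
Passing to the limit in $\max(A_\delta,B_\delta)$ produces $|\nabla f|(x) \le \max\{|\nabla f_1|(x),\, |\nabla g|(x)\}$. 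The strict hypothesis \eqref{eq:main-ineq} then rules out the second alternative, forcing $|\nabla f|(x) \le |\nabla f_1|(x)$, and combining with the trivial inequality gives equality.

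I do not expect a serious obstacle; the only mildly delicate point is bookkeeping around the case where $x$ is isolated in $M_1$ while not isolated in $M$. In that case $A_\delta = 0$ for all small $\delta$, and the argument degenerates to $|\nabla f|(x) \le |\nabla g|(x)$, which contradicts \eqref{eq:main-ineq}; hence the case cannot occur under the hypothesis, and the proof proceeds uniformly. The splitting of a sup into two parts is routine, so the real work is the three-line algebraic manipulation exploiting $f(x)-g(x)\le\lambda < f(y)-g(y)$ for $y \notin M_1$.
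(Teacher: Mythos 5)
Your proof is correct and rests on the same key observation as the paper's: for $y\notin M_1$ one has $f(y)-g(y)>\lambda\ge f(x)-g(x)$, hence $f(x)-f(y)<g(x)-g(y)$, so only points of $M_1$ can realise ratios above $|\nabla g|(x)$. The paper runs this through a maximising sequence for $|\nabla f|(x)$ and an intermediate value $r$, while you phrase it contrapositively as a splitting of the sup, yielding the (slightly cleaner) intermediate bound $|\nabla f|(x)\le\max\{|\nabla f_1|(x),|\nabla g|(x)\}$; the two arguments are essentially the same.
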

\begin{proof}
Take $x\in M_1$ satisfying  \eqref{eq:main-ineq}.   Since $|\nabla f|(x) > 0$, by definition there is a sequence $y_n\to x$ such that
    \begin{equation}
        \label{eq:seq*}
        \lim_{n\to\infty} \frac{f(x)-f(y_n)}{\rho(x,y_n)} = |\nabla f|(x).
    \end{equation}
    Let $r\in\mathbb{R}$ be such that
    $$
        |\nabla f|(x) > r > |\nabla g|(x).
    $$
    By definition,
    $$
        \frac{g(x)-g(y_n)}{\rho(x,y_n)} < r
    $$
    for all $n$ large enough. By \eqref{eq:seq*}
    $$
        \frac{f(x)-f(y_n)}{\rho(x,y_n)} > r
    $$
    for all $n$ large enough. So,
    $$
        \frac{g(x)-g(y_n)}{\rho(x,y_n)} < r < \frac{f(x)-f(y_n)}{\rho(x,y_n)},\quad\forall n>K.
    $$
    This implies
    $$
        f(y_n) - g(y_n) < f(x) - g(x) \le \lambda ,\quad\forall n>K.
    $$
    That is, $y_n\in M_1$ for all $n>K$. Then \eqref{eq:seq*} becomes
    $$
    \lim_{n\to\infty} \frac{f_1(x)-f_1(y_n)}{\rho(x,y_n)} = |\nabla f|(x),
    $$
    meaning that $|\nabla  f_1|(x) \ge |\nabla  f|(x)$, while the inverse inequality is trivial.
\end{proof}

Of course, Lemma~\ref{lem:invar} has a variant for global slopes with a shorter proof. We give the details for completeness.
\begin{lemm}
    \label{lem:invar-tilde}
    Let $f,g:M\to\mathbb{R}\cup\{+\infty\}$ and let \eqref{eq:L-lam} be satisfied for some $\lambda \in \R$.
Let $M_1 := L_\lambda(f-g)$, and let
$$
        f_1 := f\restriction_{M_1}.
    $$
    If $x\in M_1$ and
    \begin{equation}
        \label{eq:main-ineq-tilde}
        |\widetilde\nabla  f|(x) > |\widetilde\nabla g|(x),
    \end{equation}
then
    $$
        |\widetilde\nabla  f_1|(x) = |\widetilde\nabla f(x)|.
    $$
\end{lemm}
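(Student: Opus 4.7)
The trivial direction $|\widetilde\nabla f_1|(x) \le |\widetilde\nabla f|(x)$ is immediate: replacing $M$ by the subset $M_1$ only shrinks the set over which the supremum defining the global slope is taken. So all the content is in the reverse inequality, and the plan is to mimic the strategy of Lemma~\ref{lem:invar}, with a point-by-point argument that exploits the sup (rather than $\limsup$) definition. This is what allows the proof to be shorter than in the local-slope case: there are no sequences to chase.

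Fix any $r$ with $|\widetilde\nabla g|(x) < r < |\widetilde\nabla f|(x)$. By definition of the supremum, there exists at least one $y \ne x$ such that $[f(x)-f(y)]^+/\rho(x,y) > r$. In particular $f(y) < f(x) < \infty$, so $y \in \dom f$ and $f(x) - f(y) > r\rho(x,y)$. The key step is to show that such a $y$ automatically lies in $M_1$.

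To do that I would split into two cases according to the finiteness of $g(y)$. If $g(y) = +\infty$, then $y\in M_1 = L_\lambda(f-g)$ tautologically, since $y\in\dom f$ and $f(y)-g(y)=-\infty\le\lambda$. If instead $g(y)$ is finite, the definition of the global slope gives $[g(x)-g(y)]^+/\rho(x,y) \le |\widetilde\nabla g|(x) < r$, hence $g(x) - g(y) < r\rho(x,y)$. Subtracting this from $f(x)-f(y) > r\rho(x,y)$ yields $f(y)-g(y) < f(x) - g(x) \le \lambda$, so again $y\in M_1$.

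Therefore $|\widetilde\nabla f_1|(x) \ge [f(x)-f(y)]^+/\rho(x,y) > r$, and letting $r \uparrow |\widetilde\nabla f|(x)$ gives the desired inequality. The only delicate point is the $g(y)=+\infty$ case, which needs to be reconciled with the paper's convention that $\infty-\infty$ is undefined; but since $y\in\dom f$ is already forced, the convention for $L_\lambda(f-g)$ as stated in Section~\ref{sec:def} puts such $y$ in $M_1$ without ambiguity. I do not expect any serious obstacle beyond this bookkeeping.
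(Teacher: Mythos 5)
Your proof is correct and follows essentially the same route as the paper's: pick $r$ strictly between the two global slopes, find $y$ with $f(x)-f(y)>r\rho(x,y)$, use the bound $g(x)-g(y)\le r\rho(x,y)$ to place $y$ in $M_1$, and conclude. The only difference is that you explicitly handle the case $g(y)=+\infty$, which the paper's proof passes over silently; your treatment of it is consistent with the paper's conventions and is a harmless (indeed slightly more careful) addition.
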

\begin{proof}
    Fix $x\in M_1$ such that \eqref{eq:main-ineq-tilde} holds. Take any $r  > 0$ such that
    $$
        |\widetilde\nabla g|(x) < r < |\widetilde\nabla f|(x).
    $$
    By definition, there is $y\in M$ such that $f(x)-f(y) > r\rho(x,y)$. On the other hand, $g(x)-g(\cdot)\le r\rho(x,\cdot)$, so, in particular, $g(x)-g(y)\le r\rho(x,y)$. Thus, $f(x)-f(y) > g(x)-g(y)$ and $f(y)-g(y) < f(x)-g(x) \le\lambda$ meaning that $y\in M_1$ and so $f_1(x)-f_1(y) > r\rho(x,y)\Rightarrow |\widetilde\nabla f_1|(x) > r$.

    Therefore, $|\widetilde\nabla f_1|(x) \ge |\widetilde\nabla f|(x)$.
\end{proof}

Let us recall the famous Ekeland Variational Principle, see e.g. \cite[p. 45]{Ph}. Let $(M,\rho)$ be a complete metric space and let $f:M\to \Ri$ be a proper lower semicontinuous and bounded below function. Let $\eps >0$ and $x_0\in \eps\argmin f$. Then for every $\la >0$ there exists $x_\la $ such that
\begin{itemize}
\item[(i)] $\la \rho(x_0,x_\la)\le f(x_0)-f(x_\la)$;
\item[(ii)] $\la \rho(x,x_\la)+f(x)>f(x_\la),\quad \forall x\neq x_\la$;
\item[(iii)]  $\rho(x_0,x_\la)\le \eps/\la$.
\end{itemize}

Here we will use a variant of the Ekeland Variational Principle, namely

\begin{prop}\label{pro:EVP} Let $(M,\rho)$ be a complete metric space.
Let $f:M\to \Ri$ be a proper lower semicontinuous and bounded below function and let   $x_0\in \dom  f$. Then for every $\la >0$ there exists $x_\la \in \la\Crit f$ such that
\[
f(x_\la)\le f(x_0)-\la \rho(x_0,x_\la).
\]
\end{prop}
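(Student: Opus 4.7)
The plan is to reduce Proposition~\ref{pro:EVP} to the version of the Ekeland Variational Principle stated immediately before it, identifying the three numbered conclusions with what the proposition asks for.

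First, I would dispose of the trivial case where $f(x_0)=\inf f$: then $x_0$ is a global minimiser, so $f(x)-f(x_0)\ge0$ for all $x$, which shows $|\widetilde\nabla f|(x_0)=0$ and hence $x_0\in\lambda\Crit f$ for every $\lambda>0$, and the inequality $f(x_\lambda)\le f(x_0)-\lambda\rho(x_0,x_\lambda)$ holds trivially with $x_\lambda:=x_0$.

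In the remaining case set $\eps:=f(x_0)-\inf f>0$, so that $x_0\in\eps\argmin f$. Apply the EVP exactly as stated in the excerpt with this $\eps$ and the given $\lambda>0$, obtaining a point $x_\lambda$ satisfying (i), (ii), (iii). Conclusion (i) is precisely $f(x_\lambda)\le f(x_0)-\lambda\rho(x_0,x_\lambda)$, which is the required inequality and in particular guarantees $x_\lambda\in\dom f$. Conclusion (ii), rewritten as
\[
    f(x)\ge f(x_\lambda)-\lambda\rho(x,x_\lambda),\quad\forall x\in M
\]
(the strict case for $x\neq x_\lambda$, trivially for $x=x_\lambda$), is precisely the characterisation of $\lambda\Crit f$ recorded at the end of Section~\ref{sec:def}, so $x_\lambda\in\lambda\Crit f$. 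Condition (iii) is not needed.

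There is no real obstacle here; the only point worth flagging is the identification of the Thibault--Zagrodny descriptive formula $\lambda\Crit f=\{x\in\dom f:\ f(y)\ge f(x)-\lambda\rho(y,x),\forall y\}$ with the Ekeland conclusion (ii)—this is what makes the proposition a direct, quantitative reading of EVP rather than a corollary requiring extra work.
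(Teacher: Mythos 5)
Your argument is correct and coincides with the paper's own proof: both dispose of the case $f(x_0)=\inf f$ by taking $x_\lambda=x_0$, and otherwise apply the stated Ekeland Variational Principle with $\eps:=f(x_0)-\inf f$, reading conclusion (ii) as membership in $\lambda\Crit f$ and conclusion (i) as the claimed inequality. Your extra remark identifying (ii) with the descriptive formula for $\lambda\Crit f$ is exactly the (implicit) step the paper relies on.
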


\begin{proof}
If $f$ attains its minimum at $x_0$, then take $x_\la=x_0$. Otherwise, set $\eps:= f(x_0)-\inf f$ and apply the Ekeland Variational Principle at $x_0$ for $\la>0$ to get $x_\la$. Then (ii) means that $x_\la\in\la\Crit f$, while (i) is the claimed inequality.
\end{proof}
The graphical density of  $\dom |\widetilde\nabla f|$ in  $\dom f$ is pretty standard. We sketch a proof for reader's convenience.
\begin{lemm}
    \label{lem:gr-dens}
    Let $(M,\rho)$ be a complete metric space and let $f:M\to\mathbb{R}\cup\{+\infty\}$ be proper lower semicontinuous and bounded below. Then for each $x\in\dom f$ there is a sequence $\{x_n\}_{n=1}^\infty\subset \dom|\widetilde\nabla f|$ such that
    \begin{equation}
        \label{eq:gr-dens}
        (x,f(x)) := \lim_{n\to\infty}(x_n,f(x_n)).
    \end{equation}
\end{lemm}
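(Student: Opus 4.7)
The plan is to apply Proposition~\ref{pro:EVP} repeatedly with a sequence of penalty parameters tending to infinity, starting each time from the given point $x$.

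First I would dispose of the trivial case: if $f(x)=\inf f$ then $|\widetilde\nabla f|(x)=0$, so $x\in\dom|\widetilde\nabla f|$ and the constant sequence $x_n:=x$ does the job. From now on assume $f(x)>\inf f$, which is finite since $f$ is bounded below and $x\in\dom f$.

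Next, pick any sequence $\lambda_n\to\infty$ of positive numbers and apply Proposition~\ref{pro:EVP} with starting point $x_0:=x$ and parameter $\lambda=\lambda_n$. This produces a point $x_n\in\lambda_n\Crit f$ with
\[
    f(x_n)\le f(x)-\lambda_n\rho(x,x_n).
\]
Since $x_n\in\lambda_n\Crit f$, we have $|\widetilde\nabla f|(x_n)\le\lambda_n<\infty$, so $x_n\in\dom|\widetilde\nabla f|$, which gives the required inclusion.

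Finally I would read off the two convergences from the inequality above. Rearranging and using $f(x_n)\ge\inf f$ yields
\[
    \rho(x,x_n)\le\frac{f(x)-f(x_n)}{\lambda_n}\le\frac{f(x)-\inf f}{\lambda_n}\xrightarrow[n\to\infty]{}0,
\]
so $x_n\to x$. Moreover the same inequality gives $f(x_n)\le f(x)$, hence $\limsup_{n\to\infty} f(x_n)\le f(x)$, while lower semicontinuity of $f$ at $x$ yields $\liminf_{n\to\infty} f(x_n)\ge f(x)$. Combining these produces $f(x_n)\to f(x)$, which completes \eqref{eq:gr-dens}. There is no real obstacle here; the only subtlety is making sure to pick $\lambda_n\to\infty$ (rather than, say, bounded) so that $\rho(x,x_n)\to 0$, and noting that the case where $x$ already minimises $f$ must be handled separately since the variational principle is then vacuous.
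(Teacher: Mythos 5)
Your proposal is correct and follows essentially the same route as the paper: apply Proposition~\ref{pro:EVP} at $x$ with $\lambda_n\to\infty$, bound $\rho(x,x_n)$ by $(f(x)-\inf f)/\lambda_n$, and combine $f(x_n)\le f(x)$ with lower semicontinuity to get $f(x_n)\to f(x)$. The separate treatment of the case $f(x)=\inf f$ is harmless but unnecessary, since Proposition~\ref{pro:EVP} already returns $x_\lambda=x_0$ in that situation.
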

\begin{proof}
    By subtracting $\inf f$ from $f$ we may assume that $\inf f = 0$.

    Let $\bar x\in\dom f$. Let $\lambda_n\to\infty$. By Proposition~\ref{pro:EVP} there are $x_n\in\lambda_n\Crit f\subset \dom|\widetilde\nabla f|$ such that $f(x_n) \le f(\bar x) - \lambda_n\rho(x_n,\bar x)$. Since $f(x_n) \ge 0$, we have $\rho(x_n,\bar x) \le f(\bar x) / \lambda_n \to 0$, as $n\to\infty$.

    Also, $f(x_n) \le f(\bar x)$, so $\limsup_{n\to\infty} f(x_n) \le f(\bar x)$, but $\liminf_{n\to\infty} f(x_n) \ge f(\bar x)$ by the lower semicontinuity of $\bar x$.
\end{proof}
In fact, we will be rather using the following consequence of the  graphical density.

\begin{lemm}
    \label{lem:inf-on-dom}
    Let $(M,\rho)$ be a complete metric space. Let $f,g:M\to\mathbb{R}\cup\{+\infty\}$ be lower semicontinuous and let $f$ be also proper and bounded below. Then
    \begin{equation}
        \label{eq:inf-on-dom}
        \inf_{\dom f} (f-g) = \inf _{\dom|\widetilde\nabla f|} (f-g).
    \end{equation}
\end{lemm}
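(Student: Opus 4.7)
The inequality $\inf_{\dom|\widetilde\nabla f|}(f-g)\ge\inf_{\dom f}(f-g)$ is trivial, since $\dom|\widetilde\nabla f|\subset\dom f$ and on $\dom f$ the difference $f(y)-g(y)$ is always well-defined (an element of $\R\cup\{-\infty\}$, with no illegal $\infty-\infty$ case arising). So the whole work lies in the reverse inequality, i.e., in showing that for every $x\in\dom f$ one has $\inf_{\dom|\widetilde\nabla f|}(f-g)\le f(x)-g(x)$.

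To produce approximants in $\dom|\widetilde\nabla f|$ near an arbitrary $x\in\dom f$, I plan to invoke Lemma~\ref{lem:gr-dens} directly: it delivers a sequence $\{x_n\}\subset\dom|\widetilde\nabla f|$ with $x_n\to x$ and $f(x_n)\to f(x)$. This is exactly what the graphical density result packages for us, so no further variational argument involving the slope is needed in this step.

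It then remains to control $g$ along $\{x_n\}$. Since $g$ is lower semicontinuous at $x$, we have $\liminf_{n\to\infty} g(x_n)\ge g(x)$; combined with $f(x_n)\to f(x)\in\R$ this gives
$$
\limsup_{n\to\infty}\bigl(f(x_n)-g(x_n)\bigr) \;=\; f(x)-\liminf_{n\to\infty} g(x_n) \;\le\; f(x)-g(x),
$$
which is valid both when $g(x)\in\R$ and when $g(x)=+\infty$ (in the latter case, $g(x_n)\to+\infty$ and both sides collapse to $-\infty$). Hence $\inf_{\dom|\widetilde\nabla f|}(f-g)\le f(x)-g(x)$ for every $x\in\dom f$, and taking the infimum over $x$ yields \eqref{eq:inf-on-dom}. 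I do not anticipate any serious obstacle: Lemma~\ref{lem:gr-dens} does the variational heavy lifting, and the rest is a routine interplay between lower semicontinuity of $g$ and convergence $f(x_n)\to f(x)$, modulo the paper's bookkeeping about when differences of extended reals make sense.
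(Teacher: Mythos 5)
Your proof is correct and follows essentially the same route as the paper: both reduce the nontrivial inequality to an arbitrary point of $\dom f$, invoke Lemma~\ref{lem:gr-dens} to get approximants in $\dom|\widetilde\nabla f|$ with convergent function values, and conclude via the lower semicontinuity of $g$. The only difference is cosmetic bookkeeping (the paper works with a level $r>\inf_{\dom f}(f-g)$ rather than a pointwise $\limsup$ estimate).
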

\begin{proof}
    Let $r\in\mathbb{R}$ be arbitrary such that
    $$
        r > \inf_{\dom f} (f-g).
    $$
    Let $\bar x\in\dom f$ be such that $r > f(\bar x) - g(\bar x)$. By Lemma~\ref{lem:gr-dens} there is a sequence $\{x_n\}_{n=1}^\infty\subset \dom|\widetilde\nabla f|$ such that \eqref{eq:gr-dens} is fulfilled for $x=\bar x$.

    By the lower semicontinuity of $g$
    $$
        g(\bar x) \le \liminf _{n\to\infty} g(x_n) \Rightarrow -g(\bar x) \ge \limsup_{n\to\infty} (-g(x_n)).
    $$
    Therefore, $f(\bar x) - g(\bar x) \ge \limsup _{n\to\infty} (f(x_n) - g(x_n))$, that is, $r > f(x_n) - g(x_n)$ for all large enough $n$.
\end{proof}
\begin{lemm}
    \label{lem:invar-lsc}
    Let $(M,\rho)$ be a complete metric space.
    Let $f,g:M\to\mathbb{R}\cup\{+\infty\}$ and let for some $\lambda \in \R$ and $s>0$
    \begin{equation}
        \label{eq:L-lam-lsc}
          L_\lambda(f-g)\cap s\Crit f \cap\dom g\neq \varnothing.
    \end{equation}
Let $M_1 := L_\lambda(f-g)\cap s\Crit f$, and
$$
        f_1 := f\restriction_{M_1}.
    $$
    If $x\in M_1$ and \eqref{eq:main-ineq-tilde} holds at $x$
then
    $$
        |\widetilde\nabla  f_1|(x) = |\widetilde\nabla f|(x).
    $$
\end{lemm}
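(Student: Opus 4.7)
The plan is to imitate Lemma~\ref{lem:invar-tilde}, but first run the witness point of the slope inequality through Proposition~\ref{pro:EVP} so that the resulting point lands in $s\Crit f$. The key simplification is that $x\in M_1\subseteq s\Crit f$ already forces $|\widetilde\nabla f|(x)\le s$, which is precisely what makes the Ekeland parameter $s$ compatible with approximating the slope from below.

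Fix $x\in M_1$ satisfying \eqref{eq:main-ineq-tilde} and abbreviate $\alpha := |\widetilde\nabla f|(x)$, $\beta := |\widetilde\nabla g|(x)$, so that $\beta<\alpha\le s$. Choose any $r,r'$ with $\beta<r'<r<\alpha$. By the definition of the global slope there exists $y_0\in\dom f$, $y_0\neq x$, with $f(x)-f(y_0)>r\rho(x,y_0)$. Apply Proposition~\ref{pro:EVP} to $f$ starting from $y_0$ with parameter $s$ to obtain $y_s\in s\Crit f$ with $f(y_0)-f(y_s)\ge s\rho(y_0,y_s)$. Adding the two inequalities, and using $s\ge r'$, $r>r'$ and the triangle inequality, gives
\[
f(x)-f(y_s) \;>\; r\rho(x,y_0)+s\rho(y_0,y_s) \;\ge\; r'\bigl(\rho(x,y_0)+\rho(y_0,y_s)\bigr)+(r-r')\rho(x,y_0) \;>\; r'\rho(x,y_s),
\]
in particular $y_s\neq x$.

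It remains to place $y_s$ inside $L_\lambda(f-g)$. Since $|\widetilde\nabla g|(x)=\beta<r'$, the definition of the global slope yields $g(x)-g(y_s)\le\beta\rho(x,y_s)<r'\rho(x,y_s)$ (trivial if $g(y_s)=+\infty$), and combining with the previous display gives $f(y_s)-g(y_s)<f(x)-g(x)\le\lambda$. Hence $y_s\in L_\lambda(f-g)\cap s\Crit f=M_1$ and $f_1(x)-f_1(y_s)>r'\rho(x,y_s)$, so $|\widetilde\nabla f_1|(x)>r'$; letting $r'\uparrow\alpha$ gives $|\widetilde\nabla f_1|(x)\ge|\widetilde\nabla f|(x)$, and the reverse inequality is immediate from $M_1\subseteq M$.

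The subtlety is the calibration of the Ekeland parameter, and it is resolved by the opening observation: membership in $s\Crit f$ forces $\alpha\le s$, so the single choice $\mu=s$ is simultaneously small enough for the output to remain in $s\Crit f$ and large enough to dominate every $r'<\alpha$ in the triangle-inequality step. As usual, Proposition~\ref{pro:EVP} is tacitly applied under the standing assumption that $f$ is proper, lower semicontinuous and bounded below.
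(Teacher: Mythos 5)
Your proof is correct and follows essentially the same route as the paper: produce a witness $y_0$ for the slope of $f$ at $x$, push it via Proposition~\ref{pro:EVP} into $s\Crit f$ while decreasing $f$ fast enough for the triangle inequality to preserve the difference-quotient bound, and then use the bound on $|\widetilde\nabla g|(x)$ to place the resulting point in $L_\lambda(f-g)$. The only (immaterial) difference is that the paper runs Ekeland with parameter $r<s$, landing in $r\Crit f\subset s\Crit f$ and avoiding your auxiliary $r'$, whereas you run it with parameter $s$; both calibrations rest on the same observation that $x\in s\Crit f$ forces $|\widetilde\nabla f|(x)\le s$, and both tacitly use that $f$ is proper, lower semicontinuous and bounded below.
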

\begin{proof}
    Fix $x\in M_1$ such that \eqref{eq:main-ineq-tilde} holds. Take any $r  > 0$ such that
    $$
        |\widetilde\nabla g|(x) < r < |\widetilde\nabla f|(x).
    $$
    In particular, because $x\in s\Crit f$,
    \begin{equation}
        \label{eq:r<s}
        r < s.
    \end{equation}
    By definition, there is $y\in M$ such that $f(x)-f(y) > r\rho(x,y)$. By Proposition~\ref{pro:EVP} applied at $y$ there is $z\in r\Crit f$ such that $f(z) \le f(y) - r\rho(y,z)$. The triangle inequality gives
    \begin{equation}
        \label{eq:z-ineq}
        f(x) - f(z) > r\rho(x,z).
    \end{equation}
    Since by definition $g(x) - g(z) < r\rho(x,z)$, as in the proof of Lemma~\ref{lem:invar-tilde} we derive that $z\in L_\lambda(f-g)$.

    But also $z\in r\Crit f$ and \eqref{eq:r<s} gives that $z\in s\Crit f$ and so, \eqref{eq:z-ineq} becomes
    $$
        f_1(x) - f_1(z) > r\rho(x,z),
    $$
    meaning that $|\widetilde\nabla f_1|(x) \ge |\widetilde\nabla f|(x)$.
\end{proof}

\section{Results}\label{sec:res}
We start with a result of Daniilidis and Salas \cite{Arisalas}. Recently it has been    independently  proved  without transfinite induction in \cite{Aris-new}.

\begin{prop}
    \label{pro:compact}
    Let $(M,\rho)$ be a metric space and let $\tau$ be some topology on $M$. Assume that $f,g:M\to\mathbb{R}$ are $\tau$-continuous and such that
    \begin{equation}\label{eq:strict-compare-compact}
        |\nabla f|(x) > |\nabla g|(x),\quad\forall x\not\in 0\crit f.
    \end{equation}
    If $f$ has $\tau$-compact level sets then
    $$
        \inf(f-g) = \inf_{0\crit f} (f-g).
    $$
\end{prop}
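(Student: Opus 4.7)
The plan is to follow the strategy outlined in the introduction: fix an arbitrary $x_0\in M$ and consider the constrained minimization
\[
f(x)\to\min,\qquad f(x)-g(x)\le f(x_0)-g(x_0).
\]
Set $\lambda:=f(x_0)-g(x_0)$ and $M_1:=L_\lambda(f-g)$. Since $f-g$ is $\tau$-continuous and real-valued, $M_1$ is $\tau$-closed, and $x_0\in M_1$. Any minimizing sequence for $f$ on $M_1$ can be confined to $M_1\cap L_{f(x_0)}(f)$, which is the intersection of a $\tau$-closed set with a $\tau$-compact one, hence itself $\tau$-compact. As $f$ is $\tau$-continuous, it attains its minimum on $M_1$ at some point $x^*$.

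I would then show $x^*\in 0\crit f$. Suppose not, i.e., $|\nabla f|(x^*)>0$. Then by hypothesis \eqref{eq:strict-compare-compact}, $|\nabla f|(x^*)>|\nabla g|(x^*)$, so Lemma~\ref{lem:invar} applies (condition \eqref{eq:L-lam} is immediate because $f,g$ are everywhere finite and $x_0\in M_1$) and gives
\[
|\nabla f_1|(x^*)=|\nabla f|(x^*)>0,\qquad f_1:=f\restriction_{M_1}.
\]
But $x^*$ is a global minimum of $f_1$ on $M_1$, so $[f_1(x^*)-f_1(y)]^+=0$ for every $y\in M_1$, forcing $|\nabla f_1|(x^*)=0$. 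This contradiction gives $x^*\in 0\crit f$, and since $x^*\in M_1$ we conclude
\[
\inf_{0\crit f}(f-g)\le f(x^*)-g(x^*)\le\lambda=f(x_0)-g(x_0).
\]
As $x_0$ was arbitrary, $\inf_{0\crit f}(f-g)\le\inf(f-g)$; the reverse inequality is trivial.

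The conceptual heart of the argument is the invocation of Lemma~\ref{lem:invar}: without it, the constrained minimizer $x^*$ might lie strictly on the boundary $\{f-g=\lambda\}$ of the feasible set, where moving toward smaller values of $f$ risks leaving $M_1$. The strict slope inequality is precisely what guarantees that along the optimal descent directions of $f$ at $x^*$ the function $g$ decreases strictly more slowly than $f$, so that $f-g$ also strictly decreases and the descent can be performed while remaining inside $M_1$. This is the main technical point; the $\tau$-compactness of sublevel sets plays only the supporting role of ensuring attainment of the constrained minimum.
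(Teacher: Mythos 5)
Your proof is correct and follows essentially the same route as the paper's: minimise $f$ over $M_1=L_\lambda(f-g)$ using $\tau$-compactness, then rule out $x^*\notin 0\crit f$ via Lemma~\ref{lem:invar}, which forces $|\nabla f_1|(x^*)>0$ and contradicts minimality. The only cosmetic difference is that you spell out the compactness of $M_1\cap L_{f(x_0)}(f)$ where the paper simply invokes the Weierstrass Theorem for $f_1$.
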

\begin{proof}
    Let $x_0$ be arbitrary and let $\lambda := f(x_0)-g(x_0)$. Let
    $$
        M_1 := L_\lambda(f-g),\quad f_1 := f\restriction _{M_1}.
    $$
    It is clear that $M_1$ is $\tau$-closed and $f_1$ has $\tau$-compact level sets. Let
    $$
        \bar x\in\mathrm{argmin}\, f_1,
    $$
    the latter being non-empty by Weierstrass Theorem.

    If $\bar x \not\in 0\crit f$, then \eqref{eq:strict-compare-compact} and Lemma~\ref{lem:invar} give $|\nabla f_1| (\bar x) > 0$ and, therefore, $ \bar x$ cannot be a minimum of $f_1$, contradiction.

    Thus $\bar x \in 0\crit f$ and $(f-g)(\bar x) \le \lambda = (f-g)(x_0)$.
\end{proof}

We will develop the above result in cases where completeness replaces compactness. The easiest case to work with is that of Lipschitz function because then the slopes are properly defined everywhere and one does not have to meddle with domains.
\begin{prop}
    \label{pro:lips-f-g}
    Let $(M,\rho)$ be a complete metric space and let $f,g:M\to\mathbb{R}$ be Lipschitz and such that $f$ is bounded below and
    \begin{equation}
        \label{eq:non-strict-comp-M-tilde}
        |\widetilde\nabla f|(x) \ge |\widetilde\nabla g|(x),\quad\forall x\in M.
    \end{equation}
    Then
    \begin{equation}
        \label{eq:inf-f-g-eps-crit}
        \inf (f-g) = \inf_{\varepsilon\Crit f} (f-g),\quad\forall\varepsilon>0.
    \end{equation}
\end{prop}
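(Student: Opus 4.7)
The plan is to follow the structure of the proof of Proposition~\ref{pro:compact}, replacing Weierstrass by the Ekeland Variational Principle (Proposition~\ref{pro:EVP}), while overcoming the fact that \eqref{eq:non-strict-comp-M-tilde} gives only a weak slope comparison whereas Lemma~\ref{lem:invar-tilde} --- the tool I need in order to transfer approximate criticality of a restriction back to all of $M$ --- demands a strict one. The remedy I will use is a multiplicative perturbation of $g$: fix $\alpha \in (0, 1)$ small (to be chosen at the end) and set $g_\alpha := (1 - \alpha)g$. The global-slope version of \eqref{eq:mult-slope} gives $|\widetilde\nabla g_\alpha| = (1-\alpha)|\widetilde\nabla g|$, so combined with \eqref{eq:non-strict-comp-M-tilde} one obtains the strict inequality $|\widetilde\nabla g_\alpha|(x) < |\widetilde\nabla f|(x)$ at every $x$ with $|\widetilde\nabla f|(x) > 0$, which is exactly what Lemma~\ref{lem:invar-tilde} requires.

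Given an arbitrary $x_0 \in M$, set $\lambda := (f - g_\alpha)(x_0)$, $M_1 := L_\lambda(f - g_\alpha)$ and $f_1 := f\restriction_{M_1}$. The Lipschitzness of $f$ and $g$ makes $M_1$ closed (hence $\rho$-complete) and non-empty, while $f_1$ is Lipschitz and bounded below by $\inf f$. Applying Proposition~\ref{pro:EVP} to $f_1$ with parameter $\varepsilon$ then produces $x_* \in \varepsilon\Crit f_1$ with $f(x_*) \le f(x_0) - \varepsilon\rho(x_0, x_*)$. If $|\widetilde\nabla f|(x_*) = 0$, then $x_* \in \varepsilon\Crit f$ trivially; otherwise the strict slope comparison above activates Lemma~\ref{lem:invar-tilde} for the pair $(f, g_\alpha)$ and yields $|\widetilde\nabla f|(x_*) = |\widetilde\nabla f_1|(x_*) \le \varepsilon$. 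Either way $x_* \in \varepsilon\Crit f$.

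It remains to compare $(f - g)(x_*)$ with $(f - g)(x_0)$. The membership $x_* \in M_1$ expands to $f(x_*) - (1-\alpha)g(x_*) \le f(x_0) - (1-\alpha)g(x_0)$, which rearranges to $(f-g)(x_*) \le (f-g)(x_0) + \alpha(g(x_0) - g(x_*))$. Combining $|g(x_0) - g(x_*)| \le L_g\rho(x_0, x_*)$ (with $L_g$ a Lipschitz constant of $g$) and the Ekeland estimate $\varepsilon\rho(x_0, x_*) \le f(x_0) - \inf f$ bounds the error by $\alpha L_g (f(x_0) - \inf f)/\varepsilon$. Choosing $\alpha$ small enough makes this arbitrarily small, so $\inf_{\varepsilon\Crit f}(f - g) \le (f - g)(x_0)$; taking the infimum over $x_0$ gives the non-trivial inequality in \eqref{eq:inf-f-g-eps-crit}, the reverse being clear. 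The main difficulty throughout is bridging the weak slope comparison to the strict one required by Lemma~\ref{lem:invar-tilde}, and the contraction of $g$ by the factor $1 - \alpha$ is what supplies this at only an $O(\alpha)$ cost on the level sets of $f - g$.
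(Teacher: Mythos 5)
Your proof is correct, but it takes a genuinely different route from the paper's. You work with the constrained problem: you restrict $f$ to the sublevel set $M_1=L_\lambda(f-(1-\alpha)g)$, run Ekeland (Proposition~\ref{pro:EVP}) on the restriction, and invoke Lemma~\ref{lem:invar-tilde} to transfer approximate criticality of $f_1$ back to $f$ --- the contraction $g\mapsto(1-\alpha)g$ being exactly what upgrades the non-strict hypothesis \eqref{eq:non-strict-comp-M-tilde} to the strict inequality that Lemma~\ref{lem:invar-tilde} demands (this works because $|\widetilde\nabla f|\le L_f<\infty$ by Lipschitzness). This is in effect the scheme of Proposition~\ref{pro:cont-case-tilde} and Corollary~\ref{cor:cont-case-tilda-le}, supplemented by your own quantitative step: the $O(\alpha)$ error $\alpha(g(x_0)-g(x_*))$ is controlled via the Lipschitz constant $L_g$ together with the Ekeland distance bound $\rho(x_0,x_*)\le(f(x_0)-\inf f)/\varepsilon$, and then killed by letting $\alpha\to0$. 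The paper instead uses a penalization argument on all of $M$: after normalizing $f\ge0$ it applies Ekeland to $h=(1+\varepsilon)f-\min\{g(x_0),g\}$, deduces $|\widetilde\nabla h|\ge\varepsilon|\widetilde\nabla f|$ from \eqref{eq:mult-slope}, \eqref{eq:triang-ineq-prim} and Lemma~\ref{lem:min}, and reads off $\bar x\in\varepsilon\Crit f$ directly from $\bar x\in\varepsilon^2\Crit h$; no restriction to sublevel sets and no invariance lemma are needed, and the Lipschitz constant of $g$ never enters (only $g_1\le g$ and $g_1(x_0)=g(x_0)$ are used). Your approach buys a cleaner conceptual link to the later localization results and an explicit error rate $\alpha L_g(f(x_0)-\inf f)/\varepsilon$; the paper's buys a shorter self-contained argument whose error term $\varepsilon(f(x_0)-\inf f)$ is independent of any Lipschitz constant of $g$, which is why it extends more readily beyond the Lipschitz setting.
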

\begin{proof}
    By adding a constant to both $f$ and $g$ we may assume that
    $$
        f \ge 0.
    $$
    Let $x_0\in M$ and $\varepsilon>0$ be arbitrary. Set
    $$
        g_1(x) := \min\{g(x_0),g(x)\}.
    $$
    Consider
    $$
        h(x) := (1+\varepsilon) f(x) - g_1(x).
    $$
    From \eqref{eq:mult-slope} and \eqref{eq:triang-ineq} it follows that $|\widetilde\nabla h|(x) \ge (1+\varepsilon)|\widetilde\nabla f|(x) - |\widetilde\nabla g_1|(x) = \varepsilon|\widetilde\nabla f|(x) +(|\widetilde\nabla f|(x) - |\widetilde\nabla g_1|(x))$. From Lemma~\ref{lem:min} it follows that $|\widetilde\nabla f|(x) - |\widetilde\nabla g_1|(x)\ge 0$ and so
    $$
        |\widetilde\nabla h|(x) \ge \varepsilon|\widetilde\nabla f|(x),\quad\forall x\in M.
    $$
    Now $h \ge - g(x_0)$, so from Proposition~\ref{pro:EVP} there is $\bar x\in \varepsilon^2\Crit h$ such that $h(x_0)\ge h(\bar x)$. So,
    $$
        \varepsilon^2\ge |\widetilde\nabla h|(\bar x) \ge \varepsilon|\widetilde\nabla f|(\bar x)\Rightarrow \bar x\in\varepsilon\Crit f,
    $$
    and
    $$
        (1+\varepsilon)f(x_0) - g(x_0) = h(x_0) \ge h(\bar x) = (1+\varepsilon)f(\bar x) - g_1(\bar x) \ge f(\bar x) - g(\bar x),
    $$
    because $f\ge 0$ and $g_1\le g$. Since $\varepsilon>0$ was arbitrary, we  obtain
    $$
        f(x_0) - g(x_0) \ge \lim_{\varepsilon\to0}\inf_{\varepsilon\Crit f} (f-g).
    $$
    Since $x_0$ was arbitrary, we get
    $$
        \inf (f - g) \ge  \lim_{\varepsilon\to0}\inf_{\varepsilon\Crit f} (f-g),
    $$
    which is clearly equivalent to \eqref{eq:inf-f-g-eps-crit}, because $\lim_{\varepsilon\to0}\inf_{\varepsilon\Crit f} (f-g)\ge \inf_{\varepsilon\Crit f} (f-g)$ for any $\varepsilon>0$ and $\inf (f - g) \le \inf_{\varepsilon\Crit f} (f-g)$, as the infimum over the whole space is less or equal to the infimum over any non-empty subset.
\end{proof}
Of course, Proposition~\ref{pro:lips-f-g} has a version -- and that with the same proof -- for the local slope $|\nabla|$ and $\varepsilon\crit$ set. The following result in the style of Thibault and Zagrodny~\cite{TZ}, however, is $|\widetilde\nabla|$-specific.

\begin{theorem}
    \label{thm:t-z}
    Let $(M,\rho)$ be a complete metric space and let $f,g:M\to\mathbb{R}\cup\{+\infty\}$ be   proper and lower semicontinuous. Let $f$ be bounded below and such that
    \begin{equation}\label{eq:non-strict-compare-tilde}
          |\widetilde\nabla g|(x) \le |\widetilde\nabla f|(x),\quad\forall x\in\dom |\widetilde\nabla f|.
    \end{equation}

    Then
    \begin{equation}
        \label{eq:inf-33}
        \inf_{\dom f} (f-g) \ge \inf f - \inf g.
    \end{equation}
\end{theorem}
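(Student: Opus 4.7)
The strategy adapts the proof of Proposition~\ref{pro:lips-f-g}, replacing the Lipschitz continuity of $f,g$ by the restricted slope-preservation furnished by Lemma~\ref{lem:invar-lsc}. By Lemma~\ref{lem:inf-on-dom}, it suffices to establish the pointwise inequality $f(x_0) - g(x_0) \ge \inf f - \inf g$ for every $x_0 \in \dom|\widetilde\nabla f|$ with $g(x_0) < \infty$. Fix such an $x_0$ and a small $\eps>0$. Introduce $g_1 := \min\{g, g(x_0)\}$, which is lsc, equals $g$ at $x_0$, and satisfies $\inf g_1 = \inf g$ together with $|\widetilde\nabla g_1| \le |\widetilde\nabla g|$ (Lemma~\ref{lem:min}). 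The candidate auxiliary function is $h := (1+\eps)f - g_1$, bounded below by $(1+\eps)\inf f - g(x_0)$ and, by \eqref{eq:mult-slope}, \eqref{eq:triang-ineq} and the slope hypothesis, satisfying $|\widetilde\nabla h| \ge \eps\,|\widetilde\nabla f|$ on $\dom|\widetilde\nabla f|$.

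The chief obstacle is that $h$ is in general not lower semicontinuous: the term $-g_1$ is only upper semicontinuous, so Proposition~\ref{pro:EVP} cannot be invoked on $h$ directly. My plan is to apply Proposition~\ref{pro:EVP} to the genuinely lsc function $(1+\eps)f$ within the set $M_1 := L_\lambda((1+\eps)f - g) \cap s\Crit((1+\eps)f)$, where $\lambda := (1+\eps)f(x_0) - g(x_0)$ and $s$ is chosen slightly above $(1+\eps)|\widetilde\nabla f|(x_0)$. The strict slope inequality $(1+\eps)|\widetilde\nabla f| > |\widetilde\nabla g|$ (valid outside $0\crit f$) activates Lemma~\ref{lem:invar-lsc}, which produces, from each non-critical point of $(1+\eps)f$ in $M_1$, a nearby point still in $M_1$ with strictly smaller $f$. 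Iterating this construction---Proposition~\ref{pro:EVP} generating the descent and Lemma~\ref{lem:invar-lsc} keeping the new point inside $M_1$---is what in the paper replaces the transfinite induction of~\cite{TZ}, and yields inside $M_1$ a sequence driving $|\widetilde\nabla f|$ to zero and $f$ towards $\inf f$.

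Once such an approximate critical point $\bar x \in M_1$ with $f(\bar x)$ close to $\inf f$ is produced, the conclusion mirrors the end of the proof of Proposition~\ref{pro:lips-f-g}: the inclusion $\bar x \in M_1$ gives $(1+\eps)f(x_0) - g(x_0) \ge (1+\eps)f(\bar x) - g_1(\bar x) \ge f(\bar x) - g(\bar x)$, while the smallness of $|\widetilde\nabla f|(\bar x)$ together with the slope hypothesis forces $g(\bar x)$ down towards $\inf g$ (via the Lipschitz-from-above bound $g(\bar x) \le g(y) + |\widetilde\nabla g|(\bar x)\rho(\bar x,y)$ applied to $y$ close to a minimising sequence for $g$, using that $|\widetilde\nabla f|(\bar x)$ can be made arbitrarily small by the iteration). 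Passing $\eps \to 0$ then delivers $f(x_0) - g(x_0) \ge \inf f - \inf g$. The hardest step is the iterative descent producing $\bar x$: the non-closedness of $M_1$ precludes a one-shot application of Ekeland's principle on $M_1$ itself, and the required control must instead be extracted by alternating Proposition~\ref{pro:EVP} in the ambient space $M$ with the slope-preservation Lemma~\ref{lem:invar-lsc}.
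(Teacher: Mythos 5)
There is a genuine gap in your final step, and it is exactly the point where Theorem~\ref{thm:t-z} is harder than Proposition~\ref{pro:lips-f-g}. Even granting the iterative descent, what you obtain is a point $\bar x$ with $|\widetilde\nabla g|(\bar x)\le|\widetilde\nabla f|(\bar x)$ small and $f(\bar x)-g(\bar x)$ controlled by $f(x_0)-g(x_0)$. To reach \eqref{eq:inf-33} you must then bound $g(\bar x)-\inf g$ from above, and the inequality $g(\bar x)\le g(y)+|\widetilde\nabla g|(\bar x)\,\rho(\bar x,y)$ does not do this: for $y$ in a minimising sequence of $g$ the distance $\rho(\bar x,y)$ is completely uncontrolled, so the error term $|\widetilde\nabla g|(\bar x)\,\rho(\bar x,y)$ need not be small no matter how small the slope at $\bar x$ is. Nothing in your construction keeps the approximate critical points at bounded distance from near-minimisers of $g$; this is precisely the obstruction that forced transfinite induction in \cite{TZ}. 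Note also that Proposition~\ref{pro:lips-f-g} only yields the localisation $\inf(f-g)=\inf_{\eps\Crit f}(f-g)$, not a comparison with $\inf f-\inf g$, so adapting its proof cannot by itself produce the stronger conclusion. (A side remark: your worry that $M_1$ is not closed is unfounded --- by Lemma~\ref{lem:crit} and \eqref{eq:non-strict-compare-tilde} both $f$ and $g$ are Lipschitz on the closed set $s\Crit f$, so the intersection with the sub-level set of $f-g$ is closed and Ekeland applies there in one shot.)

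The paper closes the gap you left open by a contradiction argument with a logarithmic barrier, which your sketch never touches. Assuming $f(x_0)-g(x_0)<\inf f-\inf g$, one restricts to the closed set $M_1=L_\lambda(cf-g)\cap s\Crit cf$ and observes that $\inf_{M_1}g>\inf g$; hence there exist $a\notin M_1$ and $r>0$ with $g>g(a)+r$ on $M_1$. This converts the positive gap $\inf_{M_1}g-\inf g$ into the quantitative pointwise bound $|\widetilde\nabla f|(x)\ge|\widetilde\nabla g|(x)\ge r/\rho(x,a)$ on $M_1$, which by Lemma~\ref{lem:log-ro} dominates the slope of $\varphi=-r\log\rho(\cdot,a)$. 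Applying Proposition~\ref{pro:EVP} to $h=2f-\varphi$ on $M_1$ then yields points $x_n$ whose distance to $a$ must stay bounded (from $h(x_n)\le h(x_0)$ and $h\ge r\log\rho(\cdot,a)$) while simultaneously satisfying $\rho(x_n,a)>r/\eps_n\to\infty$ --- a contradiction. Some device of this kind, turning the failure of the conclusion into a slope lower bound decaying like $1/\rho(\cdot,a)$, is the missing idea in your proposal.
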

Note that, since $\displaystyle\inf_{\dom f} (f-g) < \infty$, the inequality \eqref{eq:inf-33} implies  that $\inf g \neq -\infty$. Therefore, \eqref{eq:inf-33} can be rewritten as in \cite{TZ}:
\begin{equation}
    \label{eq:inf-34}
    f(x) - \inf f \ge  g(x) - \inf g,\quad\forall x\in X.
\end{equation}
Indeed, if $f(x)=\infty$ then \eqref{eq:inf-34} is trivial; otherwise it follows from \eqref{eq:inf-33}.
\begin{proof}
    By subtracting $\inf f$ from both functions we may assume that
    \begin{equation}
        \label{eq:inf-f-0}
        \inf f = 0.
    \end{equation}
    Because of \eqref{eq:inf-on-dom} we have to show only that
    $$
        \inf_{\dom |\widetilde\nabla f|} (f-g) \ge  - \inf g.
    $$
    Assume the contrary, that is, there is $x_0 \in \dom |\widetilde\nabla f|$ such that $f(x_0) - g(x_0) < -\inf g$. Note that $x_0\in\dom g$ because of \eqref{eq:non-strict-compare-tilde}. Let $c> 1$ be such that
    $$
        \lambda := cf(x_0) - g(x_0) < -\inf g.
    $$
    Let $s:=c|\widetilde\nabla f|(x_0)$. Set
    $$
        M_1 := L_\lambda (cf-g)\cap s\Crit cf, \quad f_1 := cf\restriction_{M_1},\quad g_1 := g\restriction_{M_1}.
    $$
    Obviously, $x_0\in M_1$. From Lemma~\ref{lem:crit} we know that $s \Crit  cf$ is closed and $cf$ is Lipschitz on $s \Crit  cf$.
    Also, \eqref{eq:non-strict-compare-tilde} implies that $s \Crit  cf\subset s \Crit f\subset s \Crit  g$ and thus $g$ is also Lipschitz on $s \Crit  cf$.
    So, $(cf-g)\restriction_{s\Crit cf}$ is Lipschitz and thus $L_\lambda(cf-g)\cap s\Crit cf$ is closed.
    That is, $M_1$ is closed and $f_1$ and $g_1$ are Lipschitz.

    Since $g(x) \ge cf(x) - \lambda$ for $x\in M_1$, we have $\displaystyle \inf g_1 \ge c\inf_M f - \lambda = -\lambda > \inf g$. That is, there are $a\in M\setminus M_1$ and $r>0$ such that
    $$
        g_1(x) = g(x) > g(a) + r,\quad\forall x\in M_1.
    $$
    By definition,
    $$
        |\widetilde\nabla g|(x) \ge \frac{r}{\rho(x,a)}, \quad\forall x\in M_1.
    $$
    By \eqref{eq:non-strict-comp-M-tilde} we have
    $$
        |\widetilde\nabla f|(x) \ge \frac{r}{\rho(x,a)}, \quad\forall x\in M_1.
    $$
    Now, \eqref{eq:non-strict-compare-tilde} implies $ |\widetilde\nabla (cf)| > |\widetilde\nabla g|$ on $M_1$ and Lemma~\ref{lem:invar-lsc} applied to $cf$ and $g$ gives
    $|\widetilde\nabla f_1| (x)= c |\widetilde\nabla f|(x)$, for all $x\in {M_1}$.
    Since $c>1$ and $|\widetilde\nabla f|(x) >0$, we get $|\widetilde\nabla f_1| (x)> |\widetilde\nabla f|(x)$, for all $x\in {M_1}$.
    In particular,
    \begin{equation}
        \label{eq:nabla-f1-log-big}
        |\widetilde\nabla f_1|(x) >  \frac{r}{\rho(x,a)}, \quad\forall x\in M_1.
    \end{equation}
    Let $\varphi: M_1\to \mathbb{R}$ be defined as $\varphi(x) := -r\log\rho(x,a)$. Lemma~\ref{lem:log-ro} and \eqref{eq:nabla-f1-log-big} show that
    $$
        |\widetilde\nabla f_1|(x) > |\widetilde\nabla \varphi|(x), \quad\forall x\in M_1.
    $$
    Consider on $M_1$
    $$
        h(x) := 2f(x) - \varphi (x).
    $$
    Since $h(x) \ge r\log\rho(x,a)\ge r\log\mathrm{dist}(a,M_1) > -\infty$, the function $h$ is proper lower semicontinuous and bounded below.

    Let $\varepsilon_n\searrow0$. By Proposition~\ref{pro:EVP} applied to $h$ at $x_0$, there are $x_n\in\varepsilon_n\Crit h$ such that
    $$
        h(x_n) \le h(x_0),\quad\forall n\in\mathbb{N}.
    $$
    Since $h(x_n) \ge r\log\rho(x_n,a)$ this means that the sequence $\{\rho(x_n,a)\}_{n=1}^\infty$ is bounded.

    On the other hand, however, \eqref{eq:triang-ineq-prim} and $|\widetilde\nabla f| > |\widetilde\nabla \varphi|$ show that $|\widetilde\nabla h| > |\widetilde\nabla f|$, so \eqref{eq:nabla-f1-log-big} gives $\rho(x,a) > r/\varepsilon_n\to\infty$, as $n\to\infty$, contradiction.
\end{proof}

Our next aim is to give more precise localisation.

\begin{prop}
    \label{pro:cont-case}
    Let $(M,\rho)$ be a complete metric space and let $f:M\to\mathbb{R}\cup\{+\infty\}$ be a proper lower semicontinuous and bounded below function. Let $g:M\to\mathbb{R}$ be a continuous function such that
    \begin{equation}\label{eq:strict-compare-2}
        |\nabla f|(x) > |\nabla g|(x),\quad\forall x\in\dom f\setminus 0\crit f.
    \end{equation}
    Then for each $\varepsilon > 0$
    $$
        f - g \ge \inf_{\varepsilon\crit f}(f-g),
    $$
    or, more precisely, for each $x_0\in M$ there exists $x\in \varepsilon\crit f$ such that
    \begin{equation}
        \label{eq:cont-local-1}
        f(x)\le f(x_0) - \varepsilon\rho(x,x_0),
    \end{equation}
    and
    \begin{equation}
        \label{eq:cont-local-2}
        f(x_0) - g(x_0) \ge f(x)-g(x).
    \end{equation}
\end{prop}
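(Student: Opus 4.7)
The plan is to execute the strategy sketched in the introduction: turn the inequality \eqref{eq:cont-local-2} into an explicit constraint, run the Ekeland-type descent from Proposition~\ref{pro:EVP} on the resulting feasible set, and then use Lemma~\ref{lem:invar} to argue that approximate criticality on the constrained problem already forces approximate criticality of $f$ on the whole of $M$. Fix $x_0\in M$ and $\varepsilon>0$. If $x_0\notin\dom f$, then any $x\in\varepsilon\Crit f\subset\varepsilon\crit f$ produced by Proposition~\ref{pro:EVP} (applied at some base point in $\dom f$) satisfies \eqref{eq:cont-local-1}--\eqref{eq:cont-local-2} vacuously, since both right-hand sides are $+\infty$. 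Otherwise, set $\lambda:=f(x_0)-g(x_0)$, $M_1:=L_\lambda(f-g)$, and $f_1:=f\restriction_{M_1}$.

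Next I would verify that $M_1$ is closed in $M$; here the continuity of $g$ — not merely lower semicontinuity — is essential, as it is what makes $f-g$ lower semicontinuous on $\dom f$. Consequently $(M_1,\rho)$ is complete, $f_1$ is proper, lower semicontinuous and bounded below, and $x_0\in M_1$. Applying Proposition~\ref{pro:EVP} to $f_1$ at $x_0$ with parameter $\varepsilon$ furnishes $x\in M_1$ with $f(x)=f_1(x)\le f_1(x_0)-\varepsilon\rho(x,x_0)$ and $|\widetilde\nabla f_1|(x)\le\varepsilon$. The first inequality is exactly \eqref{eq:cont-local-1}, while $x\in M_1$ gives \eqref{eq:cont-local-2}.

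The step I expect to be the actual point of the proof is promoting criticality on the restricted space back to criticality on $M$. We have $|\nabla f_1|(x)\le|\widetilde\nabla f_1|(x)\le\varepsilon$. If $|\nabla f|(x)=0$ then $x\in\varepsilon\crit f$ trivially; otherwise \eqref{eq:strict-compare-2} gives $|\nabla f|(x)>|\nabla g|(x)$, which is precisely the hypothesis of Lemma~\ref{lem:invar} (its precondition \eqref{eq:L-lam} holds because $x_0\in L_\lambda(f-g)\cap\dom f\cap\dom g$). The lemma yields $|\nabla f|(x)=|\nabla f_1|(x)\le\varepsilon$, and we are done. Intuitively, the slope gap ensures that a descent sequence witnessing $|\nabla f|(x)$ eventually stays inside the feasible set $M_1$, so restricting to $M_1$ does not lower the slope at $x$; Lemma~\ref{lem:invar} is exactly the packaging of this observation, so once the feasible-set reduction is set up the proof is essentially a matter of stringing together the preparatory results.
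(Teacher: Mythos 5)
Your proof is correct and follows essentially the same route as the paper: restrict $f$ to the sublevel set $M_1=L_\lambda(f-g)$, apply Proposition~\ref{pro:EVP} there to get $x\in\varepsilon\Crit f_1$ satisfying \eqref{eq:cont-local-1} and \eqref{eq:cont-local-2}, and then use Lemma~\ref{lem:invar} (after splitting off the trivial case $|\nabla f|(x)=0$) to transfer $|\nabla f_1|(x)\le\varepsilon$ to $|\nabla f|(x)\le\varepsilon$. You even make explicit a point the paper leaves implicit, namely that continuity of $g$ guarantees $M_1$ is closed, hence complete, so that the variational principle applies.
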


\begin{proof}
    The claim is clear if $f(x_0) = \infty$, so let $x_0\in \dom f$. Let $\lambda := f(x_0) - g(x_0)$ and
    $$
        M_1 := L_\lambda(f-g),\quad f_1 := f\restriction_{M_1}.
    $$
    By Proposition~\ref{pro:EVP} applied to $f_1$ on $M_1$ there exists $x\in M_1$, that is, satisfying \eqref{eq:cont-local-2}; such that \eqref{eq:cont-local-1} is satisfied and $x\in\varepsilon\Crit f_1$.

    If $|\nabla f| (x) = 0$ we are done, because $x\in \varepsilon\crit f$.

    If $|\nabla f|(x) > 0$ we apply Lemma~\ref{lem:invar} to get $|\nabla f|(x) = |\nabla f_1|(x)$. Since $x\in\varepsilon\Crit f_1$ it follows that $|\nabla f_1|(x) \le \varepsilon$, so $|\nabla f|(x) \le \varepsilon$ and we are done.
\end{proof}
The  statement below is in the style of \cite{Aris-new}, but there an abstract approach is used, so it would be interesting to check how far our approach here goes.
The latter, however, is outside the scope of this article.
\begin{theorem}
 \label{thm:cont-case-new}
    Let $(M,\rho)$ be a complete metric space and let $f:M\to\mathbb{R}\cup\{+\infty\}$ be a proper lower semicontinuous and bounded below function. Let $g:M\to\mathbb{R}$ be a continuous function such that \eqref{eq:strict-compare-2} holds.

   For any $x_0\in \dom f$ at least one of the following two assertions holds:

   (a) There exists $\ol x\in 0\crit f$ such that
   \[
   f(x_0)-g(x_0)\ge f(\ol x)-g(\ol x);
 \]

   (b) For every sequence $\eps_n \searrow 0$ there  exists a sequence $x_n\in \eps_n\crit f$ which has no convergent subsequences, and satisfies
   \be\label{eq:ala}
   \sum_{n=1}^\infty \eps_n\rho(x_n,x_{n-1})<\infty,
   \ee
   and
   \be\label{eq:bala}
f(x_n)-g(x_n)\ge f(x_{n+1})-g(x_{n+1}),\quad \forall n\in\{0,\N\}.
   \ee
\end{theorem}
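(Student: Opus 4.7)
The plan is to build the required sequence by iterating Proposition~\ref{pro:cont-case} starting from the given $x_0$, and then to analyse what happens if the resulting sequence admits a convergent subsequence. Inductively, given $x_{n-1}$ I would apply Proposition~\ref{pro:cont-case} at $x_{n-1}$ with tolerance $\eps_n$ to obtain $x_n$. The proposition immediately delivers $x_n\in\eps_n\crit f$, the descent estimate $f(x_n)\le f(x_{n-1})-\eps_n\rho(x_n,x_{n-1})$, and the monotonicity \eqref{eq:bala}. Telescoping the descent estimate against $f\ge\inf f$ gives $\sum_n\eps_n\rho(x_n,x_{n-1})\le f(x_0)-\inf f<\infty$, i.e.\ \eqref{eq:ala}.

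Now for the dichotomy: if $\{x_n\}$ has no convergent subsequence, then (b) is already established. Otherwise extract a subsequence $x_{n_k}\to\ol x$. Since $f(x_n)$ is non-increasing it converges to some $F\in\R$, and lower semicontinuity of $f$ gives $f(\ol x)\le F$, while continuity of $g$ gives $g(x_{n_k})\to g(\ol x)$. Therefore $(f-g)(\ol x)\le\lim_n(f-g)(x_n)=:L\le(f-g)(x_0)$, so (a) will follow as soon as we prove $\ol x\in 0\crit f$.

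To exclude $\ol x\notin 0\crit f$, assume the contrary; by \eqref{eq:strict-compare-2} we would have $|\nabla f|(\ol x)>|\nabla g|(\ol x)$. Pick $r$ strictly between them; the definitions of the local slopes produce a sequence $y_m\to\ol x$, $y_m\neq\ol x$, with $f(\ol x)-f(y_m)>r\rho(\ol x,y_m)$ and (for $m$ large) $g(\ol x)-g(y_m)<r\rho(\ol x,y_m)$. Subtracting gives $(f-g)(y_m)<(f-g)(\ol x)\le L\le(f-g)(x_{n_k-1})$ for every $k$, so $y_m$ lies in the level set $L_{\lambda_k}(f-g)$ used in constructing $x_{n_k}$, where $\lambda_k:=(f-g)(x_{n_k-1})$. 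The proof of Proposition~\ref{pro:cont-case} (through its use of Proposition~\ref{pro:EVP} on the restriction) actually shows that $x_{n_k}$ satisfies the stronger property
$$
f(y)\ge f(x_{n_k})-\eps_{n_k}\rho(y,x_{n_k})\quad\text{for every }y\in L_{\lambda_k}(f-g).
$$
Applied to $y=y_m$ and then letting $k\to\infty$ (using $f(x_{n_k})\to F$, $\eps_{n_k}\to 0$, and $\rho(y_m,x_{n_k})\to\rho(y_m,\ol x)<\infty$) yields $f(y_m)\ge F\ge f(\ol x)$, contradicting the strict inequality $f(y_m)<f(\ol x)$ that follows from $r\rho(\ol x,y_m)>0$.

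The main obstacle is precisely this last step: the bare information $|\nabla f|(x_{n_k})\le\eps_{n_k}\to 0$ does not suffice, since local slope is not in general lower semicontinuous under convergence. What saves the argument is the global-slope bound on the restriction $f\restriction_{L_{\lambda_k}(f-g)}$ hidden in the proof of Proposition~\ref{pro:cont-case}, which lets us transfer $\eps_n$-criticality from $x_{n_k}$ to the limit $\ol x$ via test points $y_m$ kept strictly below the level $L$.
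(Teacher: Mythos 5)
Your proof is correct and follows essentially the same route as the paper: iterate Proposition~\ref{pro:cont-case}, telescope the descent estimate to get \eqref{eq:ala}, and handle a convergent subsequence (yielding (a)) by passing the $\eps_n$-global criticality of $x_n$ on the level set $M_{n-1}$ to the limit point and contradicting the strict slope inequality there. The only cosmetic difference is that you unpack the argument of Lemma~\ref{lem:invar} by hand with explicit test points $y_m$, whereas the paper cites that lemma and phrases the contradiction as $|\nabla \ol f|(\ol x)>0$ versus $\ol x\in 0\Crit \ol f$.
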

Note that \eqref{eq:ala} implies
$$
	\sum_{n=1}^\infty |\nabla f|(x_n)\rho(x_n,x_{n-1})<\infty,
$$
which is the form used in \cite{Aris-new}.
\begin{proof}
Assume first that (a) is false.

Fix a sequence $\eps_n \searrow 0$ and proceed by induction.

\textsc{Basic step:} Set $\la_0:= f(x_0)-g(x_0)$, $M_0:= L_{\la_0}(f-g)$, $f_0:= f\restriction_{M_0}$ and take $x_1\in \eps_1\Crit f_0$ such that
\[
f(x_1)\le f(x_0)-\eps_1\rho(x_1,x_0)
\]
and
\[
f(x_0)-g(x_0)\ge f(x_1)-g(x_1).
\]
The existence of such $x_1$ follows from  Proposition~\ref{pro:cont-case}.

 \textsc{Induction step:} Given $x_n\in \eps_n\Crit f_{n-1}$, set $\la_n:= f(x_n)-g(x_n)$, $M_n:=L_{\la_n}(f-g)$, $f_n:= f\restriction_{M_n}$ and take $x_n\in \eps_{n+1}\Crit f_n$ such that
\be\label{eq:mda}
f(x_{n+1})\le f(x_n)-\eps_{n+1}\rho(x_{n+1},x_n),
\ee
and
\[
f(x_n)-g(x_n)\ge f(x_{n+1})-g(x_{n+1}).
\]
The existence of  $x_{n+1}$ follows again from  Proposition~\ref{pro:cont-case}.

We claim that the sequence $\{ x_n\}_{n=1}^\infty$ has no convergent subsequence. To check this, assume the contrary.
Because $M_{n+1}\subset M_n$,  any subsequence of  the sequence $\{ x_n\}_{n=1}^\infty$ has the same structure, we may assume without loss of generality that
$\{ x_n\}_{n=1}^\infty$ itself converges. Let $x_n\to \ol x$.

Since
\[
f(x_0)-g(x_0)\ge f(x_n)-g(x_n),\quad \forall n\in \N,
\]
by continuity of $g$ at $\ol x$ and by lower semicontinuity of $f$ at $\ol x$, after passing to limit,  we obtain that
\[
f(x_0)-g(x_0)\ge f(\ol x)-g(\ol x).
\]
Since (a) is assumed false, $\ol x\not \in 0\crit f$, and therefore
\[
  |\nabla f|(\ol x) > |\nabla g|(\ol x).
  \]
 Now set $\ol\la:= f(\ol x)-g(\ol x)$, $\ol M: =L_{\ol\la}(f-g)$, $\ol f:= f\restriction_{\ol M}$.

 On the one hand, from Lemma~\ref{lem:invar} it follows that $  |\nabla \ol f|(\ol x) =  |\nabla f|(\ol x)$ and the above inequality yields $|\nabla \ol f|(\ol x)>0$.

 On the other hand, $\ol M\subset M_n$ and $\ol f=f_n\restriction_{\ol M}$ for all $n$. Fix $y\in \ol M$. Since $x_n\in \eps_n\Crit f_{n-1}$ we have that
 \[
 f(x_n)-f(y)\le \eps_n \rho(x_n,y),
 \]
  and passing to limit yields $f(\ol x )-f(y)\le 0$. Hence, $\ol x\in 0\Crit \ol f$, and in particular $|\nabla \ol f|(\ol x)=0$, which is a contradiction.

 Therefore,$\{ x_n\}_{n=1}^\infty$ has no convergent subsequences, \eqref{eq:bala} is fulfilled by construction, while for \eqref{eq:ala} note that by \eqref{eq:mda} we have
 \[
\eps_{n+1}\rho(x_{n+1},x_n) \le f(x_n)-f(x_{n+1}),\quad \forall n\in \{0,\N\},
 \]
 so
 \[
\sum_{n=0}^\infty \eps_{n+1}\rho(x_{n+1},x_n) \le f(x_0)-\inf f.
 \]
The proof is then complete.
 \end{proof}

Not surprisingly, Proposition~\ref{pro:cont-case}  has a compete analogue for the global slope.

\begin{prop}
    \label{pro:cont-case-tilde}
    Let $(M,\rho)$ be a complete metric space and let $f:M\to\mathbb{R}\cup\{+\infty\}$ be a proper lower semicontinuous and bounded below function. Let $g:M\to\mathbb{R}$ be a continuous function such that
    \begin{equation}\label{eq:strict-compare-2-tilde}
        |\widetilde\nabla f|(x) > |\widetilde\nabla g|(x),\quad\forall x\in\dom f\setminus 0\Crit f.
    \end{equation}
    Then for each $\varepsilon > 0$
    $$
        f - g \ge \inf_{\varepsilon\Crit f}(f-g),
    $$
    or, more precisely, for each $x_0\in M$ there exists $x\in \varepsilon\Crit f$ such that \eqref{eq:cont-local-1} and \eqref{eq:cont-local-2} hold.
\end{prop}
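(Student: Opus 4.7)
The plan is to mirror the proof of Proposition~\ref{pro:cont-case} line for line, replacing the local slope by the global slope and invoking Lemma~\ref{lem:invar-tilde} in place of Lemma~\ref{lem:invar}. Fix $x_0\in M$; the claim is trivial when $f(x_0)=\infty$, so assume $x_0\in\dom f$. Set $\lambda:=f(x_0)-g(x_0)$, put
\[
M_1:=L_\lambda(f-g),\qquad f_1:=f\restriction_{M_1},
\]
and note that $x_0\in M_1$, so the hypothesis \eqref{eq:L-lam} required by Lemma~\ref{lem:invar-tilde} holds.

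The first thing to check is that Proposition~\ref{pro:EVP} can be applied to $f_1$ on $M_1$. Since $g$ is real-valued and continuous and $f$ is lower semicontinuous, the difference $f-g$ is lower semicontinuous on $\dom f$, so $M_1$ is closed in $M$ and therefore complete. Moreover $f_1$ inherits lower semicontinuity and lower boundedness from $f$ and is proper as $x_0\in M_1$. Applying Proposition~\ref{pro:EVP} to $f_1$ with parameter $\varepsilon$ and starting point $x_0$ yields $x\in\varepsilon\Crit f_1$ with $f_1(x)\le f_1(x_0)-\varepsilon\rho(x,x_0)$, which gives \eqref{eq:cont-local-1}; the inclusion $x\in M_1$ gives \eqref{eq:cont-local-2}.

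It remains to deduce that $x\in\varepsilon\Crit f$. If $|\widetilde\nabla f|(x)=0$ then $x\in 0\Crit f\subset\varepsilon\Crit f$ and we are done. Otherwise $x\in\dom f\setminus 0\Crit f$, so the hypothesis \eqref{eq:strict-compare-2-tilde} gives $|\widetilde\nabla f|(x)>|\widetilde\nabla g|(x)$, i.e.\ \eqref{eq:main-ineq-tilde}. Lemma~\ref{lem:invar-tilde} then yields
\[
|\widetilde\nabla f|(x)=|\widetilde\nabla f_1|(x)\le \varepsilon,
\]
the last inequality because $x\in\varepsilon\Crit f_1$. Hence $x\in\varepsilon\Crit f$, as required.

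There is no real obstacle here: the only subtle point is the verification that $M_1$ is closed (hence complete) so that Proposition~\ref{pro:EVP} is applicable, and this relies crucially on $g$ being continuous (not merely lower semicontinuous). Everything else is a direct transcription of the proof of Proposition~\ref{pro:cont-case}, with Lemma~\ref{lem:invar-tilde} substituted for Lemma~\ref{lem:invar}.
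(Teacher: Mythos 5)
Your proposal is correct and is exactly what the paper intends: its proof of Proposition~\ref{pro:cont-case-tilde} is the one-line remark ``repeat the proof of Proposition~\ref{pro:cont-case} using Lemma~\ref{lem:invar-tilde},'' and your transcription carries this out faithfully, including the (implicit in the paper) verification that $M_1$ is closed so that Proposition~\ref{pro:EVP} applies.
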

\begin{proof}
    One can repeat the  proof of Proposition~\ref{pro:cont-case} using Lemma~\ref{lem:invar-tilde}.
\end{proof}

\begin{corr}
    \label{cor:cont-case-tilda-le}
    Let $(M,\rho)$ be a complete metric space and let $f:M\to\mathbb{R}\cup\{+\infty\}$ be a proper lower semicontinuous and bounded below function. Let $g:M\to\mathbb{R}$ be a continuous function such that \eqref{eq:non-strict-compare-tilde} is fulfilled.
    Then for each $\varepsilon > 0$, $x_0\in M$  and $r\in(0,1)$ there exists $x\in \varepsilon\Crit f$ such that \eqref{eq:cont-local-1} holds and
    \begin{equation}
        \label{eq:r-cont-local-2}
        f(x_0) - rg(x_0) \ge f(x) - rg(x).
    \end{equation}
\end{corr}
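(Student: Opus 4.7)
The plan is to reduce Corollary~\ref{cor:cont-case-tilda-le} directly to Proposition~\ref{pro:cont-case-tilde}, applied not to the pair $(f,g)$ but to the pair $(f, rg)$. The point of introducing $r \in (0,1)$ is precisely that the non-strict comparison \eqref{eq:non-strict-compare-tilde} becomes a strict comparison after scaling $g$ by $r$, which is exactly what Proposition~\ref{pro:cont-case-tilde} demands.

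First I would observe that $rg\colon M\to\R$ is continuous (as a scalar multiple of the continuous function $g$), and that by \eqref{eq:mult-slope}, which is noted to hold also for the global slope, $|\widetilde\nabla(rg)|(x)=r|\widetilde\nabla g|(x)$ for every $x\in M$.

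Next I would verify that the pair $(f, rg)$ satisfies \eqref{eq:strict-compare-2-tilde}. Fix $x\in\dom f\setminus 0\Crit f$, so that $|\widetilde\nabla f|(x)>0$. If $|\widetilde\nabla f|(x)=+\infty$, the required strict inequality is immediate (using that $|\widetilde\nabla g|$ is finite on $\dom|\widetilde\nabla f|$, which here is vacuous). Otherwise $x\in\dom|\widetilde\nabla f|$, and \eqref{eq:non-strict-compare-tilde} gives $|\widetilde\nabla g|(x)\le|\widetilde\nabla f|(x)$. Multiplying by $r\in(0,1)$ and using $|\widetilde\nabla f|(x)>0$ yields
\[
    |\widetilde\nabla(rg)|(x)=r|\widetilde\nabla g|(x)\le r|\widetilde\nabla f|(x)<|\widetilde\nabla f|(x),
\]
which is the required strict comparison.

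Finally, I would invoke Proposition~\ref{pro:cont-case-tilde} applied to $f$ and $rg$, for the given $\varepsilon>0$ and $x_0\in M$, to obtain $x\in\varepsilon\Crit f$ satisfying \eqref{eq:cont-local-1} together with $f(x_0)-(rg)(x_0)\ge f(x)-(rg)(x)$, which is precisely \eqref{eq:r-cont-local-2}. There is no real obstacle here: the entire content of the corollary is the elementary observation that the multiplier $r\in(0,1)$ converts the hypothesis of Theorem-style results (non-strict slope comparison) into the hypothesis of Proposition-style results (strict slope comparison). The only nuance to watch is the role of the exceptional set $\dom f\setminus\dom|\widetilde\nabla f|$, which, as remarked, contributes nothing because $|\widetilde\nabla f|$ is already $+\infty$ there.
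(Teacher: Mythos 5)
Your proof is correct and essentially identical to the paper's own two-line argument: apply Proposition~\ref{pro:cont-case-tilde} to the pair $(f,rg)$, using $|\widetilde\nabla(rg)|=r|\widetilde\nabla g|$ and $0<r<1$ to convert the non-strict comparison \eqref{eq:non-strict-compare-tilde} into the strict one \eqref{eq:strict-compare-2-tilde}. The only quibble is your parenthetical on the case $|\widetilde\nabla f|(x)=+\infty$: there the hypothesis \eqref{eq:non-strict-compare-tilde} says nothing about $|\widetilde\nabla g|(x)$, so the strict inequality is not literally ``immediate'' should $|\widetilde\nabla g|(x)=+\infty$ as well --- but the paper's own proof silently passes over exactly the same point, so this is not a defect of your argument relative to the source.
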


\begin{proof}
    We apply Proposition~\ref{pro:cont-case-tilde} to $f$ and $rg$. It is clear that \eqref{eq:non-strict-compare-tilde} implies \eqref{eq:strict-compare-2-tilde} for $rg$, because $|\widetilde\nabla (rg)| = r|\widetilde\nabla g|$ and $0<r<1$.
\end{proof}

Now we can remove the continuity assumption, but once again, only for the global slope  $|\widetilde\nabla|$.
\begin{theorem}
    \label{thm:lsc-case}
    Let $(M,\rho)$ be a complete metric space and let $f:M\to\mathbb{R}\cup\{+\infty\}$ be  a proper lower semicontinuous and bounded below function. Let $g:M\to\mathbb{R}\cup\{+\infty\}$ be a lower semicontinuous function such that \eqref{eq:non-strict-compare-tilde} holds.

    Then for each $\varepsilon > 0$, $x_0\in \dom |\widetilde\nabla f|$  and $r\in(0,1)$ there exists $x\in \varepsilon\Crit f$ such that \eqref{eq:cont-local-1} and \eqref{eq:r-cont-local-2} hold. More precisely,
    \begin{equation}
        \label{eq:inf-f-rg}
        \inf_{\dom f}(f-rg) = \inf _{\varepsilon\Crit f} (f-rg).
    \end{equation}
\end{theorem}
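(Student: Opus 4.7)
The plan is to find the desired point by localising to a sub-level set on which both $f$ and $g$ become Lipschitz---so that a standard Ekeland argument works---and then to recover global criticality via Lemma~\ref{lem:invar-lsc} applied to the pair $(f,rg)$.

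Fix $x_0\in\dom|\widetilde\nabla f|$; note that \eqref{eq:non-strict-compare-tilde} forces $\dom|\widetilde\nabla f|\subseteq\dom g$, so $g(x_0)<\infty$ and $g$ is proper. The case $|\widetilde\nabla f|(x_0)=0$ is trivial (take $x=x_0$), so I assume $|\widetilde\nabla f|(x_0)>0$ and pick any $s\ge|\widetilde\nabla f|(x_0)$. I then set
$$
\lambda:=f(x_0)-rg(x_0),\qquad M_1:=L_\lambda(f-rg)\cap s\Crit f,\qquad f_1:=f\restriction_{M_1}.
$$
The key observation is that \eqref{eq:non-strict-compare-tilde} yields $s\Crit f\subseteq s\Crit g$, so Lemma~\ref{lem:crit} makes both $f$ and $g$ (hence $f-rg$) Lipschitz and continuous on the closed set $s\Crit f$. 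Consequently $M_1$ is closed and contains $x_0$, and $f_1$ is proper, lsc and bounded below on the complete metric space $M_1$.

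Next I apply Proposition~\ref{pro:EVP} to $f_1$ at $x_0$ with parameter $\varepsilon$, obtaining $x\in\varepsilon\Crit f_1$ such that $f(x)\le f(x_0)-\varepsilon\rho(x,x_0)$, which is \eqref{eq:cont-local-1}; since $x\in M_1\subseteq L_\lambda(f-rg)$, \eqref{eq:r-cont-local-2} holds as well. It remains to upgrade $x\in\varepsilon\Crit f_1$ to $x\in\varepsilon\Crit f$. If $|\widetilde\nabla f|(x)=0$ this is immediate; otherwise, since $r<1$,
$$
|\widetilde\nabla(rg)|(x)=r|\widetilde\nabla g|(x)\le r|\widetilde\nabla f|(x)<|\widetilde\nabla f|(x),
$$
so Lemma~\ref{lem:invar-lsc}, applied with $g$ replaced by $rg$, yields $|\widetilde\nabla f|(x)=|\widetilde\nabla f_1|(x)\le\varepsilon$, as required.

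The equality \eqref{eq:inf-f-rg} then follows by combining the first part---which, by varying $x_0\in\dom|\widetilde\nabla f|$, gives $\inf_{\dom|\widetilde\nabla f|}(f-rg)\ge\inf_{\varepsilon\Crit f}(f-rg)$---with Lemma~\ref{lem:inf-on-dom} applied to the lsc function $rg$, giving $\inf_{\dom f}(f-rg)=\inf_{\dom|\widetilde\nabla f|}(f-rg)$; the reverse inequality is trivial. I expect the main technical point to be the closedness of $M_1$: it is precisely the hypothesis $|\widetilde\nabla g|\le|\widetilde\nabla f|$ that transfers the Lipschitzness of $f$ on $s\Crit f$ to $g$, thereby taming the possibly discontinuous and $+\infty$-valued $g$ on the working set.
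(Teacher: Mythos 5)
Your proof is correct and follows essentially the same route as the paper: the same localisation to $M_1 = L_\lambda(f-rg)\cap s\Crit f$, the same Lipschitz-transfer argument (via $s\Crit f\subset s\Crit g$ and Lemma~\ref{lem:crit}) for the closedness of $M_1$, Lemma~\ref{lem:invar-lsc} for the slope invariance, and Lemma~\ref{lem:inf-on-dom} for \eqref{eq:inf-f-rg}. The only cosmetic difference is that you apply Proposition~\ref{pro:EVP} directly on $M_1$ where the paper invokes Proposition~\ref{pro:cont-case-tilde} for the restricted pair $(f_1,g_1)$; since that proposition is itself just Ekeland plus an invariance lemma, the content is identical.
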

\begin{proof}
    Let $x_0\in\dom |\widetilde\nabla f|$ and let $s := |\widetilde\nabla f|(x_0) < \infty$. We can also assume that $s>0$, because otherwise there is nothing to prove.

    Let $\lambda := f(x_0) - rg(x_0)$ and
    $$
        M_1 := s \Crit  f\cap L_\lambda(f-rg),\quad f_1 := f\restriction_{M_1},\quad g_1 := rg\restriction_{M_1}.
    $$
    Obviously, $x_0\in M_1$. From Lemma~\ref{lem:crit} we know that $s \Crit  f$ is closed and $f$ is Lipschitz on $s \Crit  f$. Also, \eqref{eq:non-strict-compare-tilde} implies that $s \Crit  f\subset s \Crit  g$ and thus $g$ is also Lipschitz on $s \Crit  f$. So, $(f-rg)\restriction_{s\Crit f}$ is Lipschitz and thus $L_\lambda(f-rg)\cap s\Crit f$ is closed.
    That is, $M_1$ is closed and $g_1$ is Lipschitz.

    It is clear that $|\widetilde\nabla g_1| \le r|\widetilde\nabla g|$ on $M_1$. On the other hand, Lemma~\ref{lem:invar-lsc} gives that $|\widetilde\nabla f_1| = |\widetilde\nabla f|$ on $M_1$. Therefore, \eqref{eq:non-strict-compare-tilde} translates to
    $$
        |\widetilde\nabla f_1|(x) > |\widetilde\nabla  g_1|(x),\quad\forall x\in M_1:\ |\widetilde\nabla f_1|(x) > 0,
    $$
    which is \eqref{eq:strict-compare-2-tilde} for $f_1$ and $g_1$. We can apply Proposition~\ref{pro:cont-case-tilde} to $f_1$ and $g_1$ and use once again that $|\widetilde\nabla f_1| = |\widetilde\nabla f|\restriction_{M_1}$.

    Now \eqref{eq:inf-f-rg} follows from Lemma~\ref{lem:inf-on-dom}.
\end{proof}

\section{Moreau-Rockafellar Theorem}
\label{sec:mrt}

We finally show that slopes provide quite different and modern approach to Moreau-Rockafellar Theorem. For a discussion on the known proofs see \cite{iz-pams}. In relation to this, note that the recent proof \cite{mat-zl} is also quite classical in flavour.

If $(X,\|\cdot\|)$ is a Banach space and $f:X\to \Ri$ is a convex function, its \emph{subdifferential} at $x_0\in \dom f$ is the   subset of the dual space $X^*$ given by
\[
\pa f(x_0):=\{ p\in X^*:p(x-x_0)\le f(x)-f(x_0),\ \forall x\in X\},
\]
and $\pa f(x_0)=\varnothing$ whenever $x_0\not \in \dom f$. As usual, $\dom \pa f:=\{ x:\pa f(x)\neq \varnothing\}$, see e.g.~\cite{Rockafellar-book1}.

For any convex function $f:X\to \Ri$ it is easy to see that the local slope and global slope coincide at any point.

\begin{theorem}[Moreau-Rockafellar]
  \label{thm:m-r}
  Let $(X,\|\cdot\|)$ be a Banach space and let $f,g:X\to\Ri$ be proper, convex and lower semicontinuous functions. If
\begin{equation}\label{eq:g-in-f}
	\partial g(x) \subset \partial f(x),\quad \forall x\in X,
\end{equation}
  then for some constant $c\in\R$
  $$
	f(x) = g(x) + c, \quad \forall x\in X.
  $$
\end{theorem}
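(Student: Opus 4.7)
The plan is to apply Theorem~\ref{thm:t-z} after an affine shift, which yields one direction of the desired equality, and then recover the opposite direction by writing $g$ as the pointwise supremum of its affine supports. The first input is the standard convex-analytic identity $|\widetilde\nabla f|(x)=\mathrm{dist}(0,\partial f(x))$ valid for proper convex lsc $f$ on a Banach space---the paper already notes that local and global slopes coincide in the convex case, and the identification with the distance to the subdifferential is a short Hahn--Banach computation (with the convention $\mathrm{dist}(0,\varnothing)=+\infty$). The hypothesis $\partial g(x)\subset\partial f(x)$ then gives $|\widetilde\nabla g|(x)\ge|\widetilde\nabla f|(x)$ for every $x\in X$, since the infimum of $\|\cdot\|$ over a smaller set is larger.

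To apply Theorem~\ref{thm:t-z} I first reduce to a bounded-below setting. Pick any $x_0\in\dom\partial g$ (nonempty since $g$ is proper convex lsc) and any $p_0\in\partial g(x_0)\subset\partial f(x_0)$, and define $\tilde f(y):=f(y)-f(x_0)-p_0(y-x_0)$ and $\tilde g(y):=g(y)-g(x_0)-p_0(y-x_0)$. Both are non-negative convex lsc functions attaining the common minimum $0$ at $x_0$; subtracting the same affine functional preserves both the inclusion $\partial\tilde g\subset\partial\tilde f$ and the slope comparison $|\widetilde\nabla\tilde g|\ge|\widetilde\nabla\tilde f|$. Applying Theorem~\ref{thm:t-z} with $\tilde g$ as the bounded-below, larger-slope function (in the role of ``$f$'' of that theorem) and $\tilde f$ as ``$g$'' yields $\inf_{\dom g}(\tilde g-\tilde f)\ge\inf\tilde g-\inf\tilde f=0$, which unwinds to $f(y)-g(y)\le f(x_0)-g(x_0)$ for all $y\in\dom g$. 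A short symmetrization (swap two choices $x_0,x_0'\in\dom\partial g$) shows that $c:=f(x_0)-g(x_0)$ is independent of the choice of $x_0\in\dom\partial g$, so $f\le g+c$ on $\dom g$ and $f=g+c$ on $\dom\partial g$.

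For the reverse inequality I would invoke the sup-of-supports representation $g(y)=\sup_{(x,p)\in\partial g}[g(x)+p(y-x)]$ valid for every $y\in X$, a consequence of Br\o ndsted--Rockafellar which can be derived by an Ekeland argument of exactly the kind used in Proposition~\ref{pro:EVP}. Since $\partial g\subset\partial f$ and $f(x)=g(x)+c$ on $\dom\partial g$, the subgradient inequality for $f$ yields $f(y)\ge f(x)+p(y-x)=g(x)+c+p(y-x)$ for every $(x,p)\in\partial g$; taking the supremum over $(x,p)$ delivers $f(y)\ge g(y)+c$ on all of $X$. Combined with $f\le g+c$ on $\dom g$---and noting that $g(y)=+\infty$ then forces $f(y)=+\infty=g(y)+c$---this gives $f=g+c$ everywhere on $X$. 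The main obstacle I expect is precisely the asymmetry of the hypothesis: it produces only one direction of the slope inequality, so Theorem~\ref{thm:t-z} can be applied only once, and the sup-of-supports representation is what compensates without invoking maximal monotonicity of $\partial g$.
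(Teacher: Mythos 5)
Your proof is correct, but it follows a genuinely different route from the paper's. The paper first invokes the maximal monotonicity of $\partial g$ (Rockafellar) to upgrade the inclusion \eqref{eq:g-in-f} to the identity $\partial g\equiv\partial f$; after the same affine shift it then has \emph{equality} of global slopes (via the identity $|\widetilde\nabla f_1|(x)=\min\{\|p\|:p\in\partial f_1(x)\}$ from \cite[Lemma 3.2]{TZ}) and applies Theorem~\ref{thm:t-z} twice, once in each direction, to get $f_1=g_1$. You instead keep the asymmetric hypothesis: the inclusion gives only $|\widetilde\nabla g|\ge|\widetilde\nabla f|$ (with $\mathrm{dist}(0,\varnothing)=+\infty$, consistent with $\dom|\widetilde\nabla g|=\dom\partial g$), so Theorem~\ref{thm:t-z} can be applied only with $\tilde g$ in the dominant role, yielding $f\le g+c$ on $\dom g$ and the constancy of $f-g$ on $\dom\partial g$; the reverse inequality then comes from the representation $g(y)=\sup\{g(x)+p(y-x):p\in\partial g(x)\}$. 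The trade-off is clear: the paper's argument is shorter but leans on two heavy classical inputs (maximal monotonicity and the sum theorem), whereas yours replaces maximal monotonicity by Br\o ndsted--Rockafellar, i.e.\ by an Ekeland-type argument much closer in spirit to the rest of the paper. Two small points to tidy up if you write this in full: (i) the representation formula at $y\notin\dom g$ requires a touch of care in the Ekeland step --- choose the almost-minimizer of $g-p_0$ first and only then the Ekeland parameter $\la$, small relative to the distance from that point to $y$, so that the error term $\la\|y-x\|$ stays bounded; (ii) your single application of Theorem~\ref{thm:t-z} already forces $\dom g\subset\dom f$ (otherwise the left-hand infimum in \eqref{eq:inf-33} would be $-\infty$ under the paper's conventions), which is worth stating before you conclude $f=g+c$ on $\dom g$.
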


\begin{proof}
From the maximal monotonicity of $\pa f$, see \cite{Rockafellar-maxmon} or \cite[Theorem 3.24]{Ph}, and \eqref{eq:g-in-f}
it follows that
\begin{equation}\label{eq:g-is-f}
	\partial g(x) \equiv \partial f(x),\quad \forall x\in X.
\end{equation}

Take $x_0\in \dom \pa g$ and take $p\in \pa g(x_0)$. So, $p$ satisfies
\[
g(x)\ge g(x_0)+p(x-x_0), \quad \forall x\in X.
\]

From \eqref{eq:g-in-f} it follows that $p\in \pa f(x_0)$, so
\[
f(x)\ge f(x_0)+p(x-x_0), \quad \forall x\in X.
\]

Set
\[
f_1(x):= f(x)-f(x_0)-p(x-x_0),\quad g_1(x):= g(x)-g(x_0)-p(x-x_0),\quad \forall x\in X.
\]

It is clear that $\min f_1=\min g_1=0$. From the Sum Theorem, see e.g. \cite[Theorem 3.16]{Ph}, and \eqref{eq:g-is-f} we have that
\begin{equation}\label{eq:g1-is-f1}
	\partial g_1(x) \equiv \partial f_1(x),\quad \forall x\in X.
\end{equation}

Because for the proper convex  lower semicontinuous function $f_1$ it holds that $\dom |\widetilde \nabla f_1|\equiv\dom |  \nabla f_1|\equiv\dom \pa f_1$ and
\[
|\widetilde \nabla f_1|(x)=|\nabla f_1|(x)=\min\{ \|p\| : p\in \pa f_1(x)\},\quad \forall x\in \dom \pa f_1,
\]
see \cite[Lemma 3.2]{TZ}, and the same is true for $g_1$, from \eqref{eq:g1-is-f1} it holds that
\[
|\widetilde \nabla f_1|(x)=|\widetilde \nabla g_1|(x)\quad \forall x\in \dom |\widetilde \nabla f_1|\equiv \dom |\widetilde \nabla g_1|.
\]

Since $\min f_1=\min g_1=0$, Theorem~\ref{thm:t-z} implies both $f_1(x)\ge g_1(x)$ and $g_1(x)\ge f_1(x)$ for all $x\in X$. That is, $f_1(x)=g_1(x)$ for all $x\in X$, which means that
\[
f(x)=g(x)+(f(x_0)-g(x_0)).
\]
\end{proof}

\bigskip

\noindent\textbf{Appendix.}
Even in trying to solve simple problems one is tempted to use transfinite induction.  Let us take for example the problem of finding the middle point of a segment  using only ruler and compass.

Let  the segment $[a,b]$ is given. Denote the distance between the points  $a$ and $b$ by $d(a,b)$. For finding the middle point $m$ consider the following transfinite procedure, \cite{Nikolay}:

\bigskip

\noindent
$l \leftarrow a$; $r \leftarrow b$

\noindent
till $l\neq r$:

\hspace{1cm}
    take arbitrary point $x$ in $[l,r]$ different from $l$ and $r$

\hspace{2cm}
        if $d(l,x)<d(x,r)$ then:

\hspace{3cm}
            $l\leftarrow x$;

\hspace{3cm}
            $r\leftarrow$ the point on $[l,r]$ at distance $d(l,x)$ to $r$;

\hspace{2cm}
        if $d(l,x)>d(x,r)$ then:

\hspace{3cm}
            $l\leftarrow$ the point on $[l,r]$ at distance $d(x,r)$ to $l$;

 \hspace{3cm}
            $r\leftarrow x$;

\hspace{2cm}
        if $d(l,x)=d(x,r)$ then stop ($x\equiv m$).

\bigskip

The segments $[l,r]$ are nested and their lengths monotonically decrease. Transferring the construction on the real line, the latter implies that we are bypassing at least one new rational number each time, meaning that at some countable ordinal we will get the middle point.

Although the above transfinite procedures works, in practice one uses the well known finite procedure of construction of the bisector.   Admittedly, the latter uses the plane, but it is anyway good to know.

\bigskip

\noindent\textbf{Acknowledgements.} Authors express their sincere gratitude for the fruitful discussions and exchange on the subject to Aris Daniilidis, Robert Deville, Alexander D. Ioffe, Lionel Thibault, and Dariusz Zagrodny.

\end{document}